\newtheorem{theorem}{Theorem}[section]
\newtheorem{lemma}[theorem]{Lemma}
\theoremstyle{definition}
\theoremstyle{remark}
\newtheorem{remark}[theorem]{Remark}
\theoremstyle{conjecture}
\theoremstyle{corollary}
\newtheorem{corollary}[theorem]{Corollary}
\theoremstyle{problem}
\numberwithin{equation}{section}
\begin{document}
\title{ Perfect matchings of line graphs with small maximum degree}

\author{Weigen Yan}
\address{School of Sciences, Jimei University,
Xiamen 361021, China} \email{weigenyan@263.net}
\thanks{The first author was supported in part by NSFC
Grant (10771086) and by Program for New Century Excellent Talents in
Fujian Province University.}
\author{Fuji Zhang}
\address{School of Mathematical Science, Xiamen Univiversity,
Xiamen 361005, China} \email{fjzhang@xmu.edu.cn}
\thanks{The second author was supported in part by NSFC Grant \#10831001.}

\subjclass[2000]{Primary 05C15, 05C16}

\keywords{Perfect matching; Cubic graph; Line graph; Kagom\'e lattice; 3.12.12 lattice; Sierpinski gasket.}

\begin{abstract}
Let $G$ be a connected graph with vertex set $V(G)=\{v_1,v_2,\cdots,v_{\nu}\}$
, which may have multiple edges but have no loops, and $2\leq d_G(v_i)\leq 3$ for $i=1,2,\cdots,\nu$,
where $d_G(v)$ denotes the degree of vertex $v$ of $G$. We show
that if $G$ has an even number of edges, then the number of perfect matchings of the line
graph of $G$ equals $2^{n/2+1}$, where $n$ is the number of 3-degree vertices of $G$.
As a corollary, we prove that
the number of perfect matchings of a connected cubic line graph with $n$ vertices
equals $2^{n/6+1}$ if $n>4$, which implies the conjecture by Lov\'asz and Plummer holds for the connected cubic
line graphs.
As applications, we enumerate perfect matchings of the
Kagom\'e lattices, $3.12.12$ lattices, and Sierpinski gasket with dimension two in the context of statistical
physics.
\end{abstract}
\maketitle
\section{Introduction}
\hspace*{\parindent}
Throughout this paper, we suppose that $G=(V(G),E(G))$ is a
connected graph with the vertex set $V(G)=\{v_1,v_2,\cdots,v_{\nu}\}$
and the edge set $E(G)$ which may have multiple edges but have no
loops, if not specified. The line graph of $G$, denoted by $L(G)$,
is defined as the graph whose vertex set $V(L(G))=E(G)$ and two
vertices $e$ and $f$ in $L(G)$ are joined by $i$ ($i=0,1,2$) edges
if and only if two edges $e$ and $f$ in $G$ have $i$ end vertices in
common. A perfect matching of $G$ is a set of independent edges of
$G$ covering all vertices of $G$. Denote the number of perfect
matchings of $G$ by $M(G)$. It is well known that computing $M(G)$
of a graph $G$ is an $NP$-hard problem (see \cite{Jerr87,Lova86,Valiant79}).

Let $G$ be a graph with $\nu$ vertices and let $G_0,G_1,\cdots,G_k$ be
graphs such that $G_0=G$ and, for each $i> 0$, $G_i$ can be obtained
from $G_{i-1}$ by subdividing an edge once. Then $G_k$ is said
to be a subdivision of $G$. For convenience, we can regard $G$
as a subdivision of $G$. Let $S(G)$ denote the graph obtained from
$G$ by subdividing every edge once.

A classical theorem of Petersen \cite{Petersen1891} asserts that
every cubic graph without a cut-edge has at least a perfect
matching. This result can be derived as a corollary of Tutte's
1-factor theorem \cite{Lova86}. Edmonds, Lov\'asz, and Pulleyblank
\cite{ELP82} and Naddef \cite{Naddef82} proved that each cubic graph
with $\nu$ vertices has at least $\frac{\nu}{4}+2$ perfect matchings if it has no cut-edge
and has at least $\frac{\nu}{2}+1$ perfect matchings if it is
cyclically 4-edge-connected. Similar bounds can be achieved using
Lov\'asz' matching lattice theorem \cite{Lova87}. Recently, Kral,
Sereni, and Stiebitz \cite{KSS} improved the lower bound for graphs
without a cut-edge to $\frac{\nu}{2}$. In the 70's, Lov\'asz and Plummer
conjectured (in the mid-1970¡¯s) that every cubic graph with no
cut-edge has exponentially many perfect matchings.
Voorhoeve \cite{Voor79} proved this conjecture for bipartite cubic
graphs. He proved that every cubic bipartite graph $G$ with no cut-edge has at least
$6\cdot(\frac{4}{3})^{\frac{\nu}{2}-3}$ perfect matchings, where $\nu$
is the number of vertices of $G$. Furthermore, Lov\'asz and Plummer \cite{Lova86}
conjectured that for $k\geq 3$ there exist constants $c_1(k)>1$ and $c_2(k)>0$ such
that every $k$-regular elementary graph (i.e., 1-extendable graph)
with $2\nu$ vertices contains at least $c_2(k)c_1(k)^{\nu}$ perfect matchings.
Schrijver \cite{Schr98} proved this conjecture for the $k$-regular bipartite graphs.
He poved that for $k>2$ every $k$-regular bipartite graph $G$ with $2\nu$ vertices has at
least $\nu!\left(\frac{k}{\nu}\right)^n$$ (\geq
\sqrt{2\pi}\left(\frac{\nu}{e}\right)^{\nu}$ perfect matchings.

The above conjectures have been proved to
be challenging questions, and are still open. As far as we know the
latest result of the first conjecture for the case of the non bipartite
graphs was achieved by Chudnovsky and Seymour \cite{CS09} who proved that
every cubic planar graph $G$ with no cut-edge has at least
$2^{\nu/655978752}$ perfect matchings, where $\nu$ is the number
of vertices (with degree three) of $G$.

Inspiring by these discussions, we attempt to find some kind of graphs the number of perfect
matchings of which can be determined by the number of vertices of degree three.
In the next section, we prove that
if $G=(V(G),E(G))$ is a connected graph with an even number of edges
, which may have multiple edges but have no loops, and satisfies $2\leq d_G(v)\leq 3$, for $v\in V(G)$, then the
number of perfect matchings of the line graph of $G$ equals
$2^{n/2+1}$, where $n$ is the number of 3-degree vertices of $G$. As a corollary, we also prove that
the number of perfect matchings of a connected cubic line graph with $n$ vertices
equals $2^{n/6+1}$ if $n>4$.
As applications, in Section 3 we enumerate perfect matchings of the
Kagom\'e lattices, $3.12.12$ lattices, and Sierpinski gasket with dimension two in the context of statistical
physics. Finally, in Section 4 we give some remarks.
\begin{figure}[htbp]
  \centering
 \scalebox{1}{\includegraphics{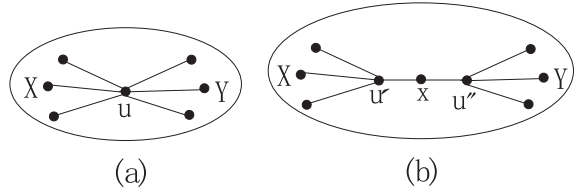}}
  \caption{\ (a)\ A graph $G$;
  \ (b)\ the corresponding graph $G'$.}
\end{figure}
\section{Main results} \hspace*{\parindent}
We first introduce some lemmas. Let $G$ be a graph and $u$ a vertex of $G$. Let $X\cup Y$ be a
partition of the edges incident $u$. For an edge $e$ incident to $u$,
let $\phi(e)$ be the other
endpoint of $e$. Construct a new graph $G'$ from
$G$ as follows (see Figure 1):

{\it (i)}\ remove $u$ and the incident edges and insert three new vertices $u', u''$ and $x$;

{\it (ii)}\ connect $x$ to $u'$ and $u''$ by an edge, and for $e\in X$, connect $u'$ to $\phi(e)$ by an edge,
and for $e\in Y$, connect $u''$ to $\phi(e)$ by an edge.

For convenience, we say that $G'$ is obtained from $G$ by splitting vertex $u$.
The following lemma is immediate from Lemma 1.3 in \cite{Ciucu97}.

\begin{lemma}
Let $G$ and $G'$ be the graphs defined as above. Then
$$M(G)=M(G').$$
\end{lemma}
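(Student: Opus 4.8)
The statement is that if $G'$ is obtained from $G$ by splitting a vertex $u$ (removing $u$, adding $u', u'', x$, joining $x$ to both $u'$ and $u''$, and redistributing the edges at $u$ between $u'$ and $u''$ according to the partition $X \cup Y$), then $M(G) = M(G')$.

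Let me think about why this is true, then structure the proof.

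**Key observation:** Consider a perfect matching $M'$ of $G'$. The vertex $x$ has degree exactly 2 in $G'$ — it's joined only to $u'$ and $u''$. So in any perfect matching $M'$ of $G'$, $x$ is matched either to $u'$ via the edge $xu'$, or to $u''$ via the edge $xu''$.

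Case A: $x$ is matched to $u'$. Then $u'$ is covered by $xu'$. So $u''$ must be matched to some $\phi(e)$ with $e \in Y$. And every $\phi(e)$ with $e \in X$ (the neighbors that got attached to $u'$) must be matched within the rest of the graph, i.e., not using any edge to $u'$ (since $u'$ is already covered). So effectively, the neighbors in $X$-side behave as if $u$ weren't there at all for them, while exactly one neighbor on the $Y$-side is matched "to $u$" (via $u''$).

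Case B: symmetric, with roles of $X$ and $Y$ swapped.

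Now in $G$: a perfect matching $M$ of $G$ must match $u$ to exactly one neighbor $\phi(e_0)$, where $e_0$ is an edge incident to $u$. That edge $e_0$ is either in $X$ or in $Y$.

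**The bijection:** Map a perfect matching $M$ of $G$ with $u$ matched along $e_0 \in Y$ to the perfect matching $M'$ of $G'$ obtained by: keeping all edges of $M$ not incident to $u$, replacing the edge $e_0 = u\phi(e_0)$ by the edge $u''\phi(e_0)$, and adding the edge $xu'$. Check: $u'$ covered by $xu'$; $x$ covered by $xu'$; $u''$ covered by $u''\phi(e_0)$; $\phi(e_0)$ covered; all other vertices covered exactly as before; and edges are independent because the original edges of $M$ at $\phi(e)$ for $e \in X$ don't touch $u'$ or $u''$ in $G'$. Similarly if $e_0 \in X$, use $xu''$ and $u'\phi(e_0)$.

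Conversely, from $M'$: if $M'$ uses $xu'$, then (as argued) $u''$ is matched to some $\phi(e_0)$, $e_0 \in Y$; send $M'$ to the matching of $G$ which replaces $u''\phi(e_0)$ by $u\phi(e_0)$, drops $xu'$, keeps everything else. If $M'$ uses $xu''$, symmetric.

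**Well-definedness subtlety:** The one thing to be slightly careful about — and which I'd flag as the main (minor) obstacle — is multiple edges. If there are parallel edges from $u$ to some vertex $w$, some going into $X$ and some into $Y$, the maps above still work because the edge set of $G'$ keeps them as distinct parallel edges ($w$ to $u'$ or $w$ to $u''$), and the bijection tracks which specific edge $e_0$ is used. So the construction is at the level of edges, not just vertex pairs, which handles multigraphs cleanly. Also need the degenerate cases where $X = \emptyset$ or $Y = \emptyset$: then $u'$ (say) has degree 1, matched necessarily to $x$; still fine.

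**Writing it up:** Actually, since the excerpt says "The following lemma is immediate from Lemma 1.3 in [Ciucu97]," the intended proof is simply to invoke Ciucu's result. Ciucu's cellular-complex / graph-splitting lemma (sometimes phrased via weighted matchings or via the "star-triangle"-type local moves) directly gives this. So the cleanest writeup is: observe that the operation described is precisely the one covered by Ciucu's Lemma 1.3 (with all edge weights equal to 1), and the conclusion follows. But I would include the short bijective argument above as well, since it's self-contained and makes the paper easier to read; the bijection $M \leftrightarrow M'$ above is explicit and the verification that it is a bijection is routine case-checking on whether $x$ is matched to $u'$ or to $u''$.
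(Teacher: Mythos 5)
Your proposal is correct and matches the paper's approach: the paper gives no argument beyond citing Lemma 1.3 of Ciucu, and your explicit bijection (pairing the forced choice of $xu'$ or $xu''$ at the degree-two vertex $x$ with the side of the partition containing the matching edge at $u$) is precisely the standard proof of that cited splitting lemma, with the multigraph and $X=\emptyset$ degeneracies correctly handled. Nothing further is needed.
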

\begin{figure}[htbp]
  \centering
 \scalebox{1.2}{\includegraphics{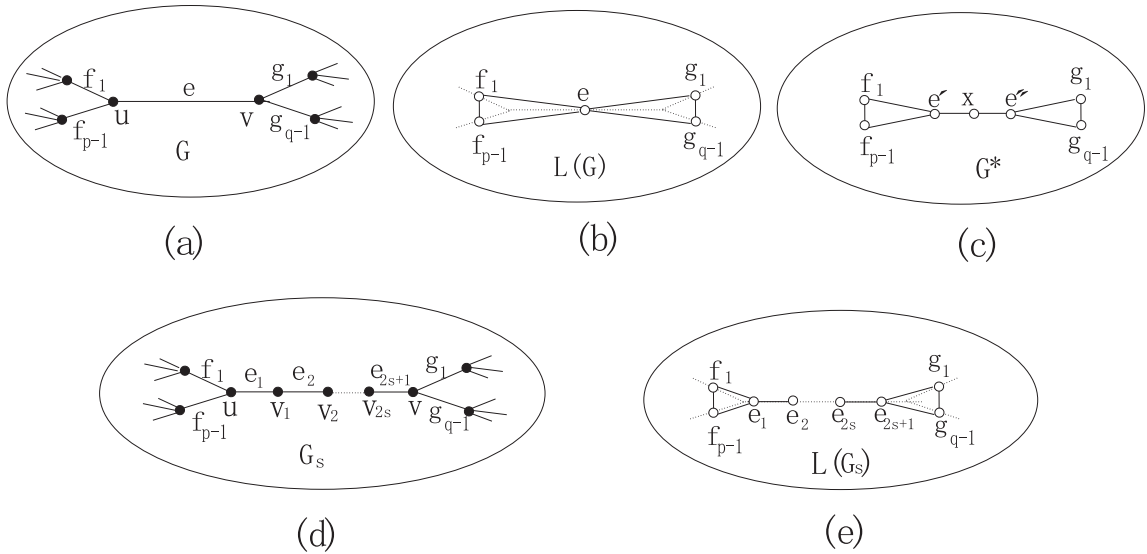}}
  \caption{\ (a)\ A graph $G$;
  \ (b)\ the line graph $L(G)$ and $G$ (dotted lines);
  \ (c)\ the graph $G^*$;
  \ (d)\ the graph $G_s$;
  \ (e)\ the line graph $L(G_s)$ and $G_s$ (dotted lines).}
\end{figure}

\begin{lemma}
Suppose $G$ is a connected graph and $|E(G)|$ is even.
Let $e=(u,v)$ be an edge of $G$ and $d_G(u)\geq 2, d_G(v)\geq 2$.
For any non negative integer $s\geq 0$, let $G_s$ be the graph obtained
from $G$ by subdividing edge $e$ $2s$ times ($G_0=G$). Then

$(a)$\ $M(G)=M(G_s)$;

$(b)$\ $M(L(G))=M(L(G_s))$ for $s=0,1,2,...$.
\end{lemma}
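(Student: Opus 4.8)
The plan is to prove both parts by induction on $s$, and for the inductive step it suffices to treat $s=1$, i.e. to show that subdividing a single edge twice changes neither $M(G)$ nor $M(L(G))$; iterating this (applied to one of the two new edges, which still joins vertices of degree $\geq 2$) gives the general statement. Part $(a)$ is the easy half: if $e=(u,v)$ is subdivided twice, producing a path $u-a-b-v$ with new degree-$2$ vertices $a,b$, then in any perfect matching of the subdivided graph the edge $(a,b)$ is either in the matching or not. If it is, the rest restricts to a perfect matching of $G-e$; if it is not, then $(u,a)$ and $(b,v)$ are both forced, and the rest restricts to a perfect matching of $G$ using $e$. These two cases are in bijection with perfect matchings of $G$ (split according to whether $e$ is used), so $M(G)=M(G_1)$, and parity of the edge count is preserved. (The hypothesis that $|E(G)|$ is even is what makes "perfect matching" a sensible notion throughout; subdividing twice keeps $|E|$ even.)

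For part $(b)$ the real work is to understand how $L(G)$ changes when $e$ is subdivided twice. Write $G_1$ for the subdivided graph with new path $u-a-b-v$ replacing $e$. In $L(G)$ the vertex $e$ is adjacent to all edges of $G$ sharing an endpoint with $e$; after subdivision, $e$ is replaced in $L(G_1)$ by the three vertices $ua$, $ab$, $bv$, where $ua$ is adjacent (in $L(G_1)$) exactly to the former neighbors of $e$ at $u$ together with $ab$; $bv$ is adjacent exactly to the former neighbors of $e$ at $v$ together with $ab$; and $ab$ is adjacent only to $ua$ and $bv$ (since $a,b$ have degree $2$ and their only other incident edges are $ua,bv$). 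Thus $L(G_1)$ is obtained from $L(G)$ by a local surgery: delete the vertex $e$, add a path $ua - ab - bv$, reconnect the "$u$-side" neighbors of $e$ to $ua$ and the "$v$-side" neighbors to $bv$. The key observation is that the degree-$2$ vertex $ab$ of $L(G_1)$ forces, in any perfect matching, exactly one of $ua$, $bv$ to be matched to $ab$, and then reasoning analogous to part $(a)$ — or, more cleanly, applying Lemma 2.1 (vertex splitting) to the vertex $e$ of $L(G)$ with the partition of its incident edges into the $u$-side and $v$-side groups — should identify $M(L(G_1))$ with $M$ of a graph obtained from $L(G)$ by splitting $e$, hence with $M(L(G))$.

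The cleanest route, and the one I would pursue, is: (1) observe $L(G_1)$ is obtained from $L(G)$ by splitting the vertex $e$ (in the sense of Lemma 2.1, with $X$ = edges of $L(G)$ at $e$ coming from the $u$-end, $Y$ = those from the $v$-end) and then subdividing the new central edge $x u'' $ — or however the bookkeeping works out — zero or more times so as to land exactly on $L(G_1)$; (2) apply Lemma 2.1 to kill the split, and part $(a)$ (already proved) to kill the extra subdivisions, yielding $M(L(G_1))=M(L(G))$. The main obstacle is step (1): verifying that the graph $L(G_1)$ is literally isomorphic to the graph produced from $L(G)$ by these two operations, including getting the multiplicities of edges right (e.g. if $u$ or $v$ has degree $2$, some of the relevant vertices of $L(G)$ may be joined to $e$ by a double edge, and one must check the surgery respects this) and checking the edge-count parity hypothesis of part $(a)$ is met at each stage. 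Once the isomorphism in step (1) is pinned down, the rest is a direct citation of Lemma 2.1 and part $(a)$. I expect the degenerate low-degree cases — and keeping careful track of which edges of $L(G)$ acquire multiplicity $2$ — to be the fiddly part that consumes most of the written proof.
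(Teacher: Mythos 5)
Your proposal is correct and follows essentially the same route as the paper: part $(a)$ via the double-subdivision identity (which is exactly Lemma 2.1), and part $(b)$ by recognizing that $L(G_1)$ is precisely the graph obtained from $L(G)$ by splitting the vertex $e$ with the $u$-side/$v$-side partition, then invoking Lemma 2.1 and handling longer even subdivisions by inserting further degree-$2$ pairs into the resulting path. The isomorphism you flag as the ``main obstacle'' does hold verbatim (including the multiplicity check when $u$ and $v$ are joined by a parallel edge), and the paper treats it as an immediate observation.
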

\begin{proof}
From Lemma 2.1, $(a)$ is immediate. Hence it suffices to prove $(b)$.

Suppose $d_G(u)=p\geq 2, d_G(v)=q\geq 2$. Let $e,f_1, f_2, \cdots$, and $f_{p-1}$ be the $p$ edges incident with
vertex $u$, and let $e,g_1, g_2,\cdots$, and $g_{q-1}$ be the $q$ edges incident with vertex $v$
(see Figure 2(a), where $p=q=3$).
Hence $(e,f_1), (e,f_2), \cdots,(e,f_{p-1}), (e,g_1), \cdots, (e,g_{q-1})$
are $p+q-2$ edges in $L(G)$ (see Figure 2(b)). Let $G^*$ be the graph obtained from $L(G)$ by splitting
vertex $e$, which is illustrated in Figure 2(c). By Lemma 2.1,
$$M(L(G))=M(G^*).\eqno{(1)}$$

Since $G_s$ is the graph obtained from $G$ by subdividing edge $e$ $2s$ times,
denote these $2s$ subdividing vertices by $v_1,v_2,\cdots,v_{2s}$ in turn (see Figure 2(d)).
Let $v_0=u$ and $v_{2s+1}=v$, and $e_i=(v_{i-1},v_i), i=1,2,\cdots,2s+1$.
Obviously, $e_1-e_2-\cdots-e_{2s+1}$ is a path with $2s+1$ vertices in $L(G)$
and $d_{L(G)}(e_i)=2$ for $i=2,3,\cdots,2s$ (see Figure 2(e)). Clearly,
$$M(L(G_s))=M(G^*). \eqno{(2)}.$$

Then, by $(1)$ and $(2)$, $(b)$ holds.
\end{proof}

Similarly, we can prove the following:
\begin{lemma}
Suppose $G$ is a connected graph. Let $e=(u,v)$ be an
edge of $G$ and $d_G(u)\geq 2, d_G(v)\geq 2$.
For any non negative integer $s\geq 0$, let $G^{(s)}$ be the graph obtained
from $G$ by subdividing edge $e$ $2s+1$ times. Then
$$M(L(G^{(1)}))=M(L(G^{(2s+1)})), s\geq 1.$$
\end{lemma}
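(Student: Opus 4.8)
The plan is to mimic the proof of Lemma 2.2 as closely as possible, replacing the ``even number of subdivisions'' bookkeeping with the parity count appropriate to an odd number of subdivisions. The statement to be proved is $M(L(G^{(1)}))=M(L(G^{(2s+1)}))$ for $s\geq 1$, so unlike part (b) of Lemma 2.2 we are \emph{not} claiming $M(L(G^{(0)}))=M(L(G))$ equals these; the odd case genuinely differs from the unsubdivided graph, which is why the base case is $G^{(1)}$ rather than $G$. First I would fix notation exactly as in Lemma 2.2: write $d_G(u)=p\geq 2$, $d_G(v)=q\geq 2$, let $e,f_1,\dots,f_{p-1}$ be the edges at $u$ and $e,g_1,\dots,g_{q-1}$ the edges at $v$, and note that in $L(G)$ the vertex $e$ is adjacent to the $f_i$'s and the $g_j$'s.

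Next I would describe $L(G^{(k)})$ structurally for $k$ subdivisions. Subdividing $e$ exactly $k$ times replaces $e$ by a path $v_0 e_1 v_1 e_2 \cdots v_{k} e_{k+1} v_{k+1}$ with $v_0=u$, $v_{k+1}=v$, and $k$ new degree-2 vertices $v_1,\dots,v_k$; in the line graph this produces a path $e_1 - e_2 - \cdots - e_{k+1}$ on $k+1$ vertices, where the interior vertices $e_2,\dots,e_{k}$ have degree exactly $2$ in $L(G^{(k)})$, while $e_1$ carries the extra adjacencies to the $f_i$'s and $e_{k+1}$ carries the extra adjacencies to the $g_j$'s (the rest of $L(G^{(k)})$ is identical to the rest of $L(G)$). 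The key combinatorial fact is how a pendant-free path of degree-2 vertices inserted between two fixed attachment points behaves under perfect matchings: a path of internal degree-2 vertices of \emph{even} length collapses (its two endpoints must be matched outward, exactly as if the path were contracted), while a path of \emph{odd} length internally forces a unique matching among its own vertices and again leaves the two attachment points to be matched outward --- but with a parity obstruction at the ends. The honest way to handle this is the splitting operation of Lemma 2.1 together with the path-contraction observation used implicitly in Lemma 2.2: for a path $e_1-e_2-\cdots-e_{k+1}$ where $e_2,\dots,e_k$ have degree $2$, repeatedly removing a matched pair of adjacent degree-2 vertices (a forced $2$-step along the path) shows $M$ is unchanged when we shorten the path by $2$. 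Hence $M(L(G^{(k)}))=M(L(G^{(k+2)}))$ for every $k$ for which the relevant degree-2 interior exists, which requires $k\geq 1$ so that there is at least one genuinely interior degree-2 vertex to start the reduction (equivalently, so that the path $e_1-\cdots-e_{k+1}$ has length $\geq 3$ when $k\geq 1$, versus length $1$ when $k=0$). Iterating this step-by-$2$ reduction from $k=2s+1$ down to $k=1$ yields $M(L(G^{(2s+1)}))=M(L(G^{(1)}))$.

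Concretely the steps are: (1) set up the line-graph picture of $L(G^{(k)})$ and identify the path of degree-2 vertices; (2) apply Lemma 2.1 by splitting a suitable vertex (as $e$ was split to form $G^*$ in Lemma 2.2) to produce a common auxiliary graph, OR directly invoke the ``delete a forced adjacent pair of degree-2 vertices'' reduction; (3) observe this reduction decreases the number of subdivisions by exactly $2$ while preserving $M$ of the line graph, provided at least one interior degree-2 vertex remains, i.e.\ as long as we stay in the range $k\geq 1$; (4) induct down from $2s+1$ to $1$. I would present step (2) via Lemma 2.1 in parallel with Lemma 2.2's equations (1) and (2): letting $H^*$ be the graph obtained from $L(G^{(1)})$ by splitting the appropriate vertex, one checks $M(L(G^{(1)}))=M(H^*)$ and $M(L(G^{(2s+1)}))=M(H^*)$ because both line graphs differ from $H^*$ only by inserting/deleting a path of degree-2 vertices of even total length, which does not change the matching count.

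The main obstacle --- and the reason the case $k=0$ is excluded and the base case is $k=1$ --- is the endpoint parity. When $k=0$ the ``path'' in $L(G)$ coming from $e$ is a single vertex adjacent to all the $f_i$'s and $g_j$'s simultaneously, so the splitting/contraction argument has nothing to slide along; the structure near $e$ is genuinely different and $M(L(G))$ need not equal $M(L(G^{(1)}))$. For $k\geq 1$ there is always a legitimate interior degree-2 vertex, the forced-pair deletion is available, and the parity of $k$ is the only invariant that survives --- reductions change $k$ by $2$, so odd stays odd. I would make sure to state carefully that when I peel off a forced matched pair of consecutive degree-2 vertices $e_j, e_{j+1}$ (with $3\leq j$, $j+1\leq k-1$ say, so both have degree $2$), their neighbors $e_{j-1}$ and $e_{j+2}$ become adjacent in the remaining graph exactly when we want the path to shorten cleanly; verifying that this really is a valid $G\mapsto G'$ step of the Lemma 2.1/path-reduction type is the one place a little care is needed, but it is entirely analogous to what is used in Lemma 2.2, which is why the excerpt says ``Similarly, we can prove the following.''
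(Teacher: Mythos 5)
Your proposal is correct and follows essentially the route the paper intends: the paper omits the proof of Lemma 2.3, remarking only that it can be proved ``similarly'' to Lemma 2.2, and your argument is exactly that adaptation --- use Lemma 2.1 to lengthen or shorten by two the induced path of degree-2 vertices that the subdivided edge creates in the line graph, and induct down from $2s+1$ subdivisions to the base case of one subdivision, noting correctly that the unsubdivided graph $G$ itself is excluded because the odd and even parities give genuinely different local structure at $e$. One small caution: the adjacent pair $e_j,e_{j+1}$ you peel off is not literally \emph{forced} to be a matching edge (the path can instead be matched outward as $e_{j-1}e_j$ and $e_{j+1}e_{j+2}$), but the reduction is still measure-preserving because it is the inverse of a Lemma 2.1 vertex splitting, which is the justification you also supply.
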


Now we state and prove our main result as follows.
\begin{theorem}
Let $G$ be a connected graph with vertex set $V(G)$
, which may have multiple edges but have no loops, and $2\leq d_G(v)\leq 3$, for $v\in V(G)$.
If $G$ has an even number of edges, then the
number of perfect matchings of the line graph of $G$ equals
$2^{n/2+1}$, where $n$ is the number of 3-degree vertices of $G$.
\end{theorem}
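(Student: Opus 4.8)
My plan is to reduce the general case to a small, explicitly computable "core" by repeatedly applying the subdivision lemmas (Lemmas 2.2 and 2.3) and the splitting lemma (Lemma 2.1). The key observation is that $M(L(G))$ depends only on rather coarse features of $G$. First I would note that since $2\le d_G(v)\le 3$ and $G$ is connected with an even number of edges, the vertices of degree $2$ lie on "threads" (maximal paths of $2$-valent vertices) joining vertices of degree $3$, or else $G$ is a cycle. The cycle case $C_m$ has $L(C_m)=C_m$, whose line graph has $M=2$; and indeed $n=0$ gives $2^{0/2+1}=2$, so this base case checks out. (One must be slightly careful when $m$ is odd versus even, but Lemma 2.2(b) lets us add two vertices to a thread without changing $M(L(G))$, so the parity of a thread length is all that matters, and a single extra subdivision interchanges the two parity classes via Lemma 2.3.)

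The main reduction: using Lemma 2.2(b) I can shorten every thread between two $3$-valent vertices to length $1$ or $2$ (whichever the parity forces), and using Lemma 2.3 I can further normalize so every such thread has length exactly $1$ — i.e. I replace $G$ by a \emph{cubic} multigraph $H$ on $n$ vertices (possibly with multiple edges or, after the reduction, with the $2s$-subdivision trick even loops handled by the "thread of even length from a vertex to itself" case) with $M(L(G))=M(L(H))$. Here the hypothesis that $|E(G)|$ is even is what guarantees the reductions stay inside the class and, together with the handshake lemma applied to $H$ (so $H$ has $3n/2$ edges), keeps everything consistent. Now $L(H)$ for a cubic $H$ is $4$-regular on $3n/2$ vertices, and it decomposes as: one triangle $T_v$ for each vertex $v\in V(H)$ (the three edges of $H$ at $v$ become a clique $K_3$ in $L(H)$), and these $n$ triangles are joined according to the edges of $H$ — each edge $uv\in E(H)$ identifies one vertex of $T_u$ with one vertex of $T_v$. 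So $L(H)$ is exactly the "triangle-replaced" graph $R(H)$, and I must show $M(R(H))=2^{n/2+1}$.

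To finish, I would analyze perfect matchings of $R(H)$ directly. Consider any triangle $T_v$: a perfect matching $\mathcal M$ of $R(H)$ uses either $0$ or $1$ of the three triangle-edges of $T_v$. If it uses one triangle-edge of $T_v$, the third vertex of $T_v$ must be matched by its "outgoing" edge (the edge of $H$ it represents); if it uses none, all three vertices of $T_v$ are matched outward. Encode the state of $T_v$ by which of its three vertices (equivalently, which incident edge of $H$) is matched "outward" when a triangle-edge is used, or a special symbol when none is. Translating the local constraints, one sees that the outgoing edges used by $\mathcal M$ form a subset $F\subseteq E(H)$ such that at each vertex $v$ either exactly one edge of $F$ is incident (triangle-edge case) or all three are (no-triangle-edge case); in the first case the triangle contributes a unique internal edge, in the second a unique choice among... wait — in the no-triangle-edge case a triangle with all three vertices matched outward contributes nothing further, and in the one-triangle-edge case the matching inside $T_v$ is forced once we know which vertex goes out. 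Hence $\mathcal M\mapsto F$ is a bijection onto the set of spanning subgraphs $F$ of $H$ in which every vertex has degree $1$ or $3$, i.e. $F=E(H)$ minus a perfect matching of $H$... more precisely $E(H)\setminus F$ is a perfect matching of $H$. So $M(R(H))=M(H)\cdot(\text{correction for the }F=E(H)\text{ all-degree-3 component})$ — and here I expect the exact bookkeeping to be the main obstacle: I need to check that the all-triangle-edges-used configurations (where $F=E(H)$ means every triangle is "empty") contribute, together with the $M(H)$ "minus a perfect matching" configurations, exactly the claimed total $2^{n/2+1}$. The cleanest route, which is what I would actually write, is to avoid $M(H)$ entirely: use Lemma 2.1 once more to \emph{split} every degree-$3$ vertex of $H$, turning each triangle of $R(H)$ into a simple gadget whose two perfect-matching states can be chosen independently, giving $2^{(\text{number of independent binary choices})}$ directly; counting the choices (one binary parity choice per vertex of $H$, plus a global $\times 2$ from a cycle-space argument) yields $2^{n/2+1}$. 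I would verify the exponent against the small cases $K_4$ (here $n=4$, but $L(K_4)=K_{3,3}$... actually $|E(K_4)|=6$ is even, $n=4$, prediction $2^{3}=8$; one checks $M(L(K_4))$ directly) and the $\theta$-graph to pin down the additive constant $+1$.
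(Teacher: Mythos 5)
There are two genuine gaps here, and the first is structural. Your normalization step --- ``using Lemma 2.3 I can further normalize so every such thread has length exactly $1$'' --- is not something Lemma 2.3 provides. Lemma 2.3 only says that subdividing an edge $1,3,5,\dots$ times all yield the same value of $M(L(\cdot))$; it does not relate the value for an odd number of subdivisions to the value for an even number, and no such relation can hold in general (it would change the parity of $|E(G)|$, and already $M(L(C_3))=0$ while $M(L(C_4))=2$). So a thread whose parity forces it to contain one $2$-valent vertex cannot be shortened to a bare edge, and your reduction to a cubic multigraph $H$ fails. This is exactly why the paper's induction must treat two separate cases ($j=0$: the two chosen $3$-valent vertices are adjacent; $j=1$: they are joined through a single $2$-valent vertex) rather than collapsing everything to the cubic situation.

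Second, even in the cubic case your analysis of $L(H)$ rests on a misreading of its structure. For cubic $H$, every edge of $L(H)$ lies inside one of the triangles $T_v$ (two edges of $H$ adjacent in $L(H)$ share an endpoint $v$, and the corresponding edge of $L(H)$ is an edge of $T_v$); the triangles share vertices and there are no additional ``outgoing'' edges. The graph consisting of disjoint triangles joined by a matching is $L(S(H))$, the clique-inserted graph, not $L(H)$, so your bijection with degree-$\{1,3\}$ spanning subgraphs does not describe the object you reduced to. And in any case you stop exactly where the work begins: you concede that ``the exact bookkeeping is the main obstacle'' and replace it with an unverified count (``one binary parity choice per vertex of $H$, plus a global $\times 2$''), which as stated would give $2^{n+1}$ rather than $2^{n/2+1}$. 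The paper's route is quite different and avoids any global description of $L(H)$: it inducts on $n$, locates two nearby $3$-valent vertices, splits the corresponding vertex of $L(G)$ via Lemma 2.1 so that $M(L(G))$ becomes a sum of two terms, and identifies each term as $M(L(G'))$ for a connected graph $G'$ with $n-2$ vertices of degree three (with separate care for multiple edges and for cut edges). To repair your proposal you would need, at minimum, to keep the $j=1$ threads in play and to supply the actual counting argument for the cubic (and near-cubic) line graphs, which is the entire content of the theorem.
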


\begin{proof}
 We prove the theorem by induction on the number of 3-degree vertices of $G$.
If $G$ has no 3-degree vertex, i.e., $n=0$, then $G$ is a cycle with an even number of edges.
Hence $M(L(G))=2^{0/2+1}=2$.

Note that $G$ has an even number of 3-degree vertices.
Now we assume that $n\geq 2$. Suppose $u$ and $u'$ are two 3-degree vertices of $G$. Since $G$ is connected,
there exists one path $P(u-u')$: $u=v_0-v_1-\cdots-v_k=u'$ in $G$. Let $j+1=\min\{i|d_G(v_i)=3, 1\leq i\leq k\}$
and $v=v_{j+1}$. Then $1\leq j+1\leq k$. Hence there exists one path $P(u-v)$: $u=v_0-v_1-\cdots-v_{j+1}=v$ in $G$ such that
$d_G(u)=d_G(v)=3$, and $d_G(v_1)=d_G(v_2)=\cdots=d_G(v_{j})=2$ if $j>0$.
By Lemmas 2.2 and 2.3, it suffices to consider two cases $j=0$ and $j=1$.


{\bf Case 1}\ \ $j=0$.

Obviously, $e_1=(u,v)$ is an edge of $G$. If $G$ has three multiple edges connecting vertices
$u$ to $v$, then $G$ has exactly three edges.
This is a contradiction with $|E(G)|$ is even. Hence we need to consider the following two subcases:
\begin{figure}[htbp]
  \centering
 \scalebox{1.1}{\includegraphics{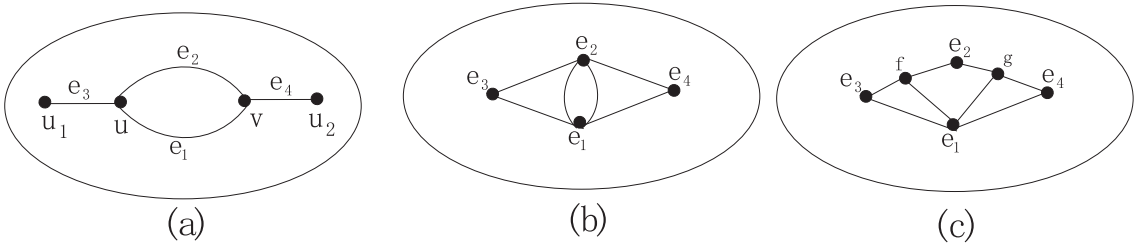}}
  \caption{\ (a)\ A graph $G$;
  \ (b)\ the line graph $L(G)$;
  \ (c)\ the graph $G''$.}
\end{figure}\

{\bf Subcase 1.1}\ \ $G$ has two multiple edges $e_1$ and $e_2$ connecting vertices $u$ and $v$ (see Figure 3(a)).

Let $e_1, e_2$, and $e_3=(u,u_1)$ (resp., $e_1, e_2$,
and $e_4=(u,u_2)$) be the three edges incident with vertex $u$ (resp., vertex $v$) in $G$ (see Figure 3(a)),
and $u_1\neq v, u_2\neq u$.
Construct a new graph $G'$ from $G$
by deleting vertex $v$ and connecting $u$ and $u_2$ by an edge. Hence $G'$ is a connected graph
the degree of each of whose vertices is two or three. Particularly, the number of the 3-degree
vertices of $G'$ equals $n-2$, where $n$ is the number of the 3-degree vertices of $G$. By induction,
$$M(L(G'))=2^{(n-2)/2+1}=2^{n/2}. \eqno{(3)}$$

Note that $d_{L(G)}(e_1)=d_{L(G)}(e_2)=4$
(see Figure 3(b)). Let $G''$ be the graph obtained from $L(G)$ by splitting vertex $e_2$,
which is illustrated in Figure 3(c). By Lemma 2.1,
$$M(L(G))=M(G'').\eqno{(4)}$$
Note that $M(G'')=M(G''-f-e_2)+M(G''-g-e_2)$ (see Figure 3(c)). Since each perfect matching of $G''-f-e_2$ (resp.
$G''-g-e_2$) contains no edge $(e_1,e_4)$ (resp. $(e_1,e_3)$),
by the definition of $G'$, it is not difficult to show that
$$M(G''-f-e_2)=M(G''-g-e_2)=M(L(G')).$$ Hence
$$M(G'')=2M(L(G')). \eqno{(5)}$$
By $(3),(4)$, and $(5)$, $M(L(G))=2^{n/2+1}$.

{\bf Subcase 1.2}\ \ There exists only one edge $e_1$ connecting $u$ to $v$ in $G$.

Let $e_1=(u,v), e_2=(u,u_1)$, and $e_3=(u,u_2)$ (resp., $e_1=(u,v), e_4=(v,u_3)$,
and $e_5=(v,u_4)$) be the three edges incident with vertex $u$ (resp., vertex $v$) in $G$,
and $v\notin \{u_1,u_2\}, u\notin\{u_3,u_4\}$.

{\bf Subcase 1.2.1} $e_1$ is a cut edge of $G$.

If $e_1$ is a cut edge of $G$ then $G-e_1$ has two connected components $G_1$ and $G_2$.
Assume that $G_1$ contains vertex $u$ and $G_2$ contains vertex $v$. Note that $|E(G)|=|E(G_1)|+|E(G_2)|+1$ is even.
Without loss of generality, we suppose that $|(E(G_1)|$ is even and $|E(G_2)|$ is odd. Let $G_3=G[V(G_2)\cup\{u\}]$,
i.e., $G_3$ is the graph obtained from $G$ by deleting all vertices in $V(G_1)\backslash\{u\}$. Note that $e_1$
is a cut vertex of $L(G)$. Hence
$$M(L(G))=M(L(G_1))M(L(G_3)).\eqno{(6)}$$

Let $n_1$ and $n_2$ be the numbers of 3-degree vertices in $G_1$ and $G_3$, respectively. So
$$n_1+n_2=n-1.\eqno{(7)}$$
By induction,
$$M(L(G_1))=2^{n_1/2+1}.\eqno{(8)}$$

In order to enumerate perfect matchings of $G_3$, we first prove the following:

{\bf Claim 1}\ \ Let $H$ be a connected graph with an even number of edges and $u$ a vertex of $G$ satisfying $d_H(u)=1$.
Suppose $e=(u,v)$ is the edge incident with $u$ in $H$ and there exists only three edges
$e=(v,u),e_1=(v,v_1)$, and $e_2=(v,v_2)$ incident with $v$ in $H$ (i.e., $d_H(v)=3$) satisfying $|\{v_1,v_2\}|=2$
(i.e., $e_1$ and $e_2$ are not two multiple edges of $G$).
Construct a new graph $H'$ from $H$ by deleting vertices $u$ and $v$
and connecting vertices $v_1$ to $v_2$ by an new edge
$e'=(v_1,v_2)$. Then $M(L(H))=M(L(H'))$.

In fact, by the definition of $H$, $d_{L(H)}(e)=2$ and $(e_1,e_2)$ is an edge of $L(H)$ which can not be an edge
of a perfect matching of $L(G)$. Hence $M(L(H))=M(L(H)-(e_1,e_2))$. Let $H^*$ be the graph obtained from $L(H)-(e_1,e_2)$
by deleting vertex $e$ and identifying vertices $e_1$ and $e_2$ (the new vertex is denoted by $e^*$).
Obviously, $L(H)-(e_1,e_2)$ is the graph obtained from $H^*$ by splitting vertex $e^*$. By Lemma 2.1,
$$M(H^*)=M(L(H)-(e_1,e_2)).$$
By the definition of $H'$, $L(H')=H^*$. Hence
$$M(L(H))=M(L(H'))$$
implying the claim holds.

Note that $G_3$ satisfies $d_{G_3}(u)=1$ and $d_{G_3}(v)=3$. The three edges incident with $v$ in $G_3$ are
$(v,u), (v,u_3)$, and $(v,u_4)$. We consider the following cases (a) and $(b)$.

{\bf Subcase 1.2.1(a)} $(v,u_3)$ and $(v,u_4)$ are not two multiple edges in $G_3$ (i.e., $u_3\neq u_4$).

Construct a new graph $G_3'$ from $G_3$ by deleting vertices $u$ and $v$ and connecting vertices
$u_3$ to $u_4$ by a new edge $(u_3,u_4)$.
By the claim above,
$$M(L(G_3))=M(L(G_3')).$$
Note that the number of 3-degree vertices of $G_3'$ equals $n_2-1$. By induction,
$$M(L(G_3'))=2^{(n_2-1)/2+1}=M(L(G_3)).\eqno{(9)}$$ Hence, by $(6)-(9)$, we have
$$M(L(G))=2^{n/2+1}.$$

{\bf Subcase 1.2.1(b)} $(v,u_3)$ and $(v,u_4)$ are two multiple edges in $G_3$ (i.e., $u_3=u_4$).

Construct a new graph $G_3^*$ from
$G_3$ by replacing edge $(v,u_3)$ in $G_3$ (resp., $(v,u_4)$)
with a path $(v-w_1-w_2-u_3)$ (resp., $(v-w_3-w_4-u_3)$).
That is, $G_3^*$ is the graph obtained from $G_3$ by
subdividing each of edges $(v,u_3)$ and $(v,u_4)$ twice. By Lemma 2.2,
$$M(L(G_3))=M(L(G_3^*)).$$
From Subcase 1.2.1(a),
$$M(L(G_3^*))=2^{(n_2-1)/2+1}.$$
So we have proved the following:
$$M(L(G_3))=M(L(G_3^*))=2^{(n_2-1)/2+1}. \eqno{(9')}$$
Hence, by $(6)-(8)$ and $(9')$, we have
$$M(L(G))=2^{n/2+1}.$$
\begin{figure}[htbp]
  \centering
 \scalebox{1.1}{\includegraphics{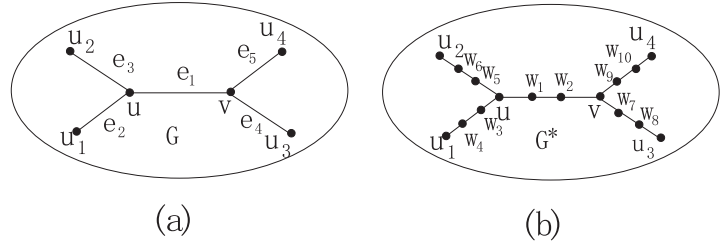}}
  \caption{\ (a)\ A graph $G$;
  \ (b)\ the graph $G^*$;}
\end{figure}

{\bf Subcase 1.2.2} $e_1$ is not a cut edge of $G$.

Construct a new graph $G^*$ from $G$ by replacing five edges $e_1=(u,v),e_2=(u,u_1),e_3=(u,u_2),e_4=(v,u_3)$,
and $e_5=(v,u_4)$ by five paths $(u-w_1-w_2-v), (u-w_3-w_4-u_1), (u-w_5-w_6-u_2), (v-w_7-w_8-u_3)$,
and $(v-w_9-w_{10}-u_4)$, respectively (see Figure 4). That is, $G^*$ is the graph obtained from $G$ by subdividing
each of five edges $e_1,e_2,e_3,e_4$, and $e_5$ twice. By Lemma 2.2,
$$M(L(G))=M(L(G^*)).\eqno{(10)}$$
Let $G_{(1)}$ (resp., $G_{(2)}$) be the graph obtained from $G^*$
by deleting vertices $w_1, w_2$, and $v$, and connecting
vertices $w_7$ to $w_9$ by a new edge $(w_7,w_9)$ (resp., by deleting vertices $w_1, w_2$, and $u$,
and connecting vertices $w_3$ to $w_5$ by a new edge $(w_3,w_5)$). Let $f=(u,w_1),g=(w_1,w_2)$, and $h=(w_2,v)$
(see Figure 4(b)). Note that $M(L(G^*))=M(L(G^*)-f-g)+M(L(G^*)-g-h)$.
With a similar method as in Subcase 1.2.1, we may prove that
$$M(L(G^*)-f-g)=M(L(G_{(1)})), M(L(G^*)-g-h)=M(L(G_{(2)})).$$Hence
$$M(L(G^*))=M(L(G_{(1)}))+M(L(G_{(2)})).\eqno{(11)}$$

Note that since $e_1$ is not a cut edge of $G$, both $G_{(1)}$ and
$G_{(2)}$ are connected graphs with $n-2$ 3-degree vertices. By induction,
$$M(L(G_{(1)}))=M(L(G_{(2)}))=2^{(n-2)/2+1}.\eqno{(12)}$$
From $(10)-(12)$, it follows that $M(L(G))=2^{n/2+1}$.

{\bf Case 2}\ \ $j=1$.

Now $e_1=(u,v_1)$ and $e_2=(v_1,v)$ are two edges of $G$, and $d_{G}(v_1)=2, d_G(u)=d_G(v)=3$.
If $G$ has two multiple edges connecting vertices $u$ and $v$,
then $G$ is the graph with three vertices obtained
from the graph with two vertices and three multiple edges by subdividing
an edge once. It is not difficult to see that $G$ has two 3-degree vertices and  $M(L(G))=4=2^{2/2+1}$.
Now we only need to distinguish the following two subcases:
\begin{figure}[htbp]
  \centering
 \scalebox{1.1}{\includegraphics{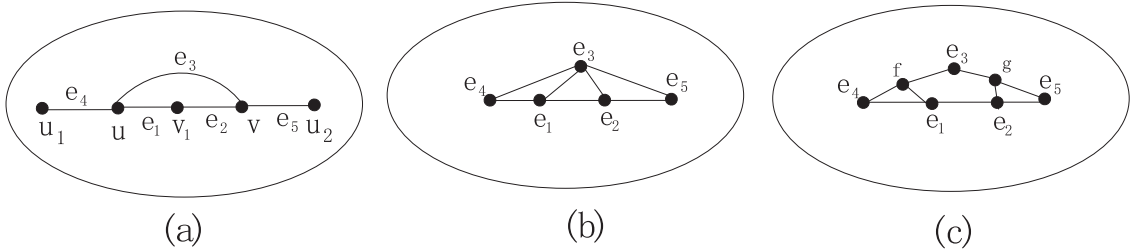}}
  \caption{\ (a)\ A graph $G$;
  \ (b)\ the line graph $L(G)$;
  \ (c)\ the graph $G_1''$.}
\end{figure}

{\bf Subcase 2.1}\ \ There exists one edge $e_3=(u,v)$ in $G$ connecting vertices $u$ and $v$ (see Figure 5(a)).

Let $e_1=(u,v_1),e_3=(u,v)$, and $e_4=(u,u_1)$ (resp., $e_2=(v,v_1), e_3=(u,v)$, and $e_5=(v,u_2)$)
be the three edges incident with vertex $u$ (resp., with verex $v$), and $u_1\neq v, u_2\neq u$ (see Figure 5(a)).
Construct a new graph $G_1'$ from $G$ by deleting vertex $v_1$.
Then $G_1'$ is a connected graph with $n-2$ 3-degree
vertices. By induction,
$$M(L(G_1'))=2^{(n-2)/2+1}.\eqno{(13)}$$
Note that $d_{L(G)}(e_3)=4$ (see Figure 5(b)).
Let $G_1''$ be the graph obtained from $L(G)$ by splitting vertex $e_3$,
which is illustrated in Figure 5(c). Thus, by Lemma 2.1,
$$M(G_1'')=M(L(G)).\eqno{(14)}$$
Note that $M(G_1'')=M(G_1''-f-e_3)+M(G_1''-g-e_3)$ (see Figure 5(c)).
Since each perfect matching of $G_1''-f-e_3$ (resp. $G_1''-g-e_3$) contains no edge $(e_2,e_5)$ (resp. $(e_1,e_4)$),
by Lemma 2.1, it is not difficult to see that $$M(G_1''-f-e_3)=M(G_1''-g-e_3)=M(L(G_1')).$$ Hence, from $(13)$ and $(14)$.
$$M(L(G))=M(G_1'')=2M(L(G_1'))=2^{n/2+1}.$$

{\bf Subcase 2.2}\ \ There exists no edge in $G$ connecting vertices $u$ and $v$.

Using the same method as in Subcase 1.2 (hence we omit the proof), we can show $M(L(G))=2^{n/2+1}$.

So we have finished the proof of the theorem.
\end{proof}

If $G$ is a graph with $n$ vertices ($n\rightarrow \infty$), define the entropy of $G$ as
\cite{Fisher66,Elser89,WW08}

$$\mathcal E(G)=\lim_{n\rightarrow \infty}\frac{2\log(M(G))}{n}.$$

The following result is immediate from Theorem 2.4.
\begin{corollary}
Suppose $G$ is a connected cubic graph $G$ with an even number of edges. Then the number of perfect
matchings of $L(G)$ equals $2^{\nu/2+1}$, and the entropy of $L(G)$ equals $\frac{2\log 2}{3}$,
where $\nu$ is the number of vertices of $G$.
\end{corollary}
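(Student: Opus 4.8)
The plan is to derive this corollary as an essentially immediate consequence of Theorem 2.4, which has just been proved. First I would observe that if $G$ is a connected cubic graph, then every vertex has degree exactly three, so in the notation of Theorem 2.4 the number of $3$-degree vertices is $n = \nu$, the total number of vertices. Moreover, since $G$ is cubic, $|E(G)| = 3\nu/2$; for this to be an integer $\nu$ must be even, and the hypothesis that $|E(G)|$ is even then means $3\nu/2$ is even, i.e.\ $\nu \equiv 0 \pmod 4$ — but for the corollary as stated we only need that $|E(G)|$ is even, which is assumed. I would simply note that $G$ satisfies the hypotheses of Theorem 2.4 (it is connected, has no loops, has $2 \le d_G(v) = 3 \le 3$ for every vertex, and has an even number of edges), so that theorem applies directly and gives $M(L(G)) = 2^{\nu/2 + 1}$.

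Next I would compute the entropy. The relevant parameter is the number of vertices of $L(G)$, which is $|E(G)| = 3\nu/2$. Writing $N = 3\nu/2$ for the number of vertices of $L(G)$, we have $\nu = 2N/3$, so
\[
M(L(G)) = 2^{\nu/2 + 1} = 2^{N/3 + 1}.
\]
Then
\[
\mathcal{E}(L(G)) = \lim_{N \to \infty} \frac{2\log(M(L(G)))}{N} = \lim_{N \to \infty} \frac{2(N/3 + 1)\log 2}{N} = \frac{2\log 2}{3},
\]
as claimed. To make the limit meaningful one considers a sequence of such cubic graphs $G$ with $\nu \to \infty$ (equivalently $N \to \infty$), and the constant term contributes nothing in the limit.

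There is essentially no obstacle here: the corollary is a direct specialization of Theorem 2.4 together with a one-line arithmetic substitution $|V(L(G))| = |E(G)| = 3\nu/2$. The only point requiring a word of care is the parity condition — one should make sure the class of connected cubic graphs with an even number of edges is nonempty and indeed infinite (for instance $K_4$ has $6$ edges, and one can build arbitrarily large examples), so that the limit defining the entropy is taken over a genuine infinite family; but this is routine and does not affect the stated formulas.
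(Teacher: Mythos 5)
Your proposal is correct and matches the paper, which simply states the corollary is immediate from Theorem~2.4: with every vertex of the cubic graph having degree three one has $n=\nu$, and the entropy is computed relative to the $3\nu/2$ vertices of $L(G)$. The side remark that $\nu\equiv 0\pmod 4$ is accurate but not needed.
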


Given a connected cubic graph $G$ with $\nu$ vertices, construct a
cubic graph $G'$ with $3\nu$ vertices from $G$ by cutting off all
``corners" of $G$ such that one third of each edge is cut off at
each of both ends (see Figure 6 for an example), which is called
the clique-inserted-graph of $G$ in \cite{ZCC09}. It is not difficult
to see that $G'$ is the line graph $L(S(G))$ of the subdivision
$S(G)$ of $G$. That is, $G'=L(S(G))$.
\begin{figure}[htbp]
  \centering
 \scalebox{0.5}{\includegraphics{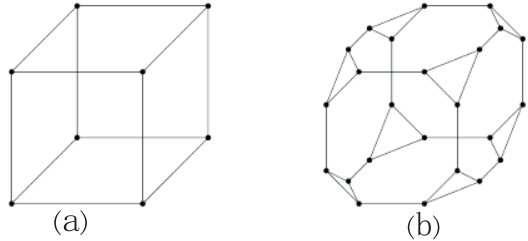}}
  \caption{\ (a)\ A connected cubic graph $G$;
  \ (b)\ the line graph $L(S(G))$.}
\end{figure}
\begin{corollary}
Suppose $G$ is a connected cubic graph $G$ with $\nu$ vertices and
$S(G)$ denotes the graph obtained from $G$ by subdividing every edge
once. Then the number of perfect matchings of the clique-inserted-graph of $G$ (i.e., the line graph of $S(G)$)
equals $2^{\nu/2+1}$, and the entropy of $L(S(G))$ equals $\frac{\log 2}{3}$.
\end{corollary}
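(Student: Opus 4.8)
The plan is to derive Corollary 2.7 directly from Corollary 2.6 by making two elementary observations about the subdivision graph $S(G)$. First I would check the hypotheses of Corollary 2.6 are met by $S(G)$. If $G$ is a connected cubic graph with $\nu$ vertices, then $G$ has $\frac{3\nu}{2}$ edges, so $S(G)$ has $\nu$ original vertices (each still of degree $3$) together with $\frac{3\nu}{2}$ subdivision vertices (each of degree $2$); in particular $S(G)$ is connected and satisfies $2\le d_{S(G)}(v)\le 3$ for every vertex. The number of $3$-degree vertices of $S(G)$ is exactly $\nu$. Note that Corollary 2.6 as literally stated speaks of a \emph{cubic} graph with an even number of edges, so to be safe I would instead invoke Theorem 2.4 directly with $n=\nu$: I must then also verify that $S(G)$ has an even number of edges, namely $|E(S(G))|=2\cdot\frac{3\nu}{2}=3\nu$, which is always even regardless of the parity of $\nu$. (This is the one small subtlety — one cannot apply the ``cubic'' Corollary 2.6 to $S(G)$ itself since $S(G)$ is not cubic, so Theorem 2.4 is the right tool.)

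Second, I would recall the identification $G'=L(S(G))$ for the clique-inserted-graph, already explained in the paragraph preceding the corollary in the excerpt: cutting off the corners of $G$ produces, for each original vertex of degree $3$, a triangle ($K_3$), and these triangles are joined in pairs by the edges of $G$; this is precisely the line graph of $S(G)$, since each edge of $S(G)$ incident to a subdivision vertex becomes a vertex of $L(S(G))$ and the three such edges meeting at an original vertex form a triangle in $L(S(G))$. Applying Theorem 2.4 to $H=S(G)$ with $n=\nu$ then yields
$$M(L(S(G)))=2^{\nu/2+1},$$
which is the first assertion.

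For the entropy, I would count the number of vertices of $L(S(G))$: it has one vertex per edge of $S(G)$, i.e.\ $N:=3\nu$ vertices. Then by definition
$$\mathcal E(L(S(G)))=\lim_{N\to\infty}\frac{2\log(M(L(S(G))))}{N}=\lim_{\nu\to\infty}\frac{2\log\!\left(2^{\nu/2+1}\right)}{3\nu}=\lim_{\nu\to\infty}\frac{2\bigl(\tfrac{\nu}{2}+1\bigr)\log 2}{3\nu}=\frac{\log 2}{3},$$
establishing the second assertion.

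There is essentially no hard step here: the whole content is in the correct bookkeeping of vertex and edge counts of $S(G)$ and in recognizing the clique-inserted-graph as $L(S(G))$, after which Theorem 2.4 does all the work. The only place to be careful — and the one I would flag as the ``main obstacle,'' though it is a minor one — is making sure to apply Theorem 2.4 rather than Corollary 2.6 (because $S(G)$ is not cubic) and observing that the even-edge-count hypothesis of Theorem 2.4 is automatically satisfied since $|E(S(G))|=3\nu$ is even for every $\nu$, so that, unlike in Corollary 2.6, no parity restriction on $\nu$ is needed.
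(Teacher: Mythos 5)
Your proof is correct and follows the same route the paper intends: apply Theorem 2.4 to $S(G)$, which has exactly $\nu$ vertices of degree three, all other vertices of degree two, and $3\nu$ edges, then divide by the $3\nu$ vertices of $L(S(G))$ to get the entropy. One small slip in the justification: $3\nu$ is not even ``regardless of the parity of $\nu$''; rather, it is even because $\nu$ is forced to be even by the handshake lemma for the cubic graph $G$ --- the conclusion that $|E(S(G))|$ is even still stands.
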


\begin{theorem}
Suppose $G$ is a connected cubic line graph $G$ with $\nu$ vertices.
Then

$(1)$. if $G=K_4$, then $M(G)=3$;

$(2)$. if $G\neq K_4$, then there exists a connected cubic graph $G^+$ with $\nu/3$ vertices such that
$G=L(S(G^+))$. Moreover, $M(G)=2^{\nu/6+1}$ and the entropy of $G$ equals $\frac{\log 2}{3}$.
\end{theorem}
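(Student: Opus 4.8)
The plan is to reduce this classification to the already-established Theorem 2.4 and Corollary 2.7 via Whitney's theorem on line graphs. First I would recall that among connected graphs, the only pair of non-isomorphic graphs sharing a line graph is $K_3$ and $K_{1,3}$, and that a connected graph $G$ is a line graph of some graph precisely when it does not contain one of Beineke's nine forbidden induced subgraphs. Since a cubic line graph $G$ is connected, there is a connected graph $H$ with $G=L(H)$. Part $(1)$ then handles the degenerate case: $K_4=L(K_3)$ and $K_4$ also equals $L(K_{1,3})$; a direct count gives $M(K_4)=3$ since $K_4$ has exactly three perfect matchings.

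For part $(2)$, suppose $G\neq K_4$; I would argue that the preimage $H$ is forced to be a subdivision $S(G^+)$ of a cubic graph $G^+$. The degree condition is the key computation: if $e=(a,b)$ is an edge of $H$, then $d_{L(H)}(e)=d_H(a)+d_H(b)-2=3$, so every edge of $H$ joins a vertex of degree $2$ to a vertex of degree $3$ (the alternatives $d_H(a)=d_H(b)$ are impossible since $3$ is odd, and $\{1,4\}$ is ruled out because degree-$1$ vertices of $H$ would need a matching partner at the corresponding vertex of $L(H)$ while degree-$\geq 4$ vertices would push $d_{L(H)}(e)\geq 3$ with equality only in the stated case — and actually a degree-$1$ vertex at $a$ makes $e$ a vertex of $L(H)$ of degree $d_H(b)-1$, so $d_H(b)=4$, which one excludes by a small separate argument or by noting it forces $G$ to contain a $K_4$-block). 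Thus $H$ is bipartite with parts $A$ (degree $2$) and $B$ (degree $3$), every vertex of $A$ has its two neighbours in $B$; contracting each degree-$2$ vertex (equivalently, noticing that suppressing all degree-$2$ vertices yields a cubic multigraph $G^+$, and $H=S(G^+)$ exactly when no two degree-$2$ vertices are adjacent, which holds here). Connectedness of $G=L(H)$ gives connectedness of $H$ hence of $G^+$, and $|V(G^+)|=|B|$ while $|V(G)|=|E(H)|=3|B|$, so $\nu=3|V(G^+)|$, i.e. $G^+$ has $\nu/3$ vertices.

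With $G=L(S(G^+))$ for a connected cubic graph $G^+$ on $\nu/3$ vertices, I would invoke Corollary 2.7 directly: $M(L(S(G^+)))=2^{|V(G^+)|/2+1}=2^{(\nu/3)/2+1}=2^{\nu/6+1}$. The entropy is then $\mathcal E(G)=\lim 2\log(2^{\nu/6+1})/\nu=2\cdot(1/6)\log 2=\frac{\log 2}{3}$, matching the claim. The main obstacle I expect is the rigour of the degree analysis showing $H$ must be a subdivision of a cubic graph rather than merely a graph with degrees in $\{2,3\}$ — in particular, cleanly ruling out degree-$1$ and degree-$4$ vertices of $H$ and ruling out adjacent degree-$2$ vertices, and making sure the Whitney ambiguity between $K_3$ and $K_{1,3}$ only affects the $K_4$ case. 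One must also confirm that $S(G^+)$ always has an even number of edges (it has $2|E(G^+)|=3|V(G^+)|$ edges; since we need this even, one should check $|V(G^+)|$ is even, which follows because a cubic graph has an even number of vertices), so Corollary 2.7's hypotheses are genuinely met.
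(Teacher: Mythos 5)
Your argument is correct and takes essentially the same route as the paper: both rest on the degree computation $d_H(a)+d_H(b)=5$ for every edge of the preimage $H$, rule out the $(1,4)$ pattern (which forces $H=K_{1,4}$ and $G=K_4$), conclude $H=S(G^+)$ for a connected cubic $G^+$ on $\nu/3$ vertices in the remaining case, and then invoke Theorem 2.4 (via the clique-inserted-graph corollary). One small factual slip worth fixing: $L(K_3)=L(K_{1,3})=K_3$, not $K_4$ --- the identity relevant to part $(1)$ is $K_4=L(K_{1,4})$ --- but this does not affect your proof, since you compute $M(K_4)=3$ directly and the Whitney ambiguity plays no role in the rest of the argument.
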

\begin{proof}
Suppos that $G'$ is a connected graph and $G=L(G')$ is a connected cubic graph. Let $e=(u,v)$ be an edge of $G'$.
we have
$d_{G'}(u)+d_{G'}(v)-2=d_G(e)=3$, where $d_{G'}(u)$ denotes the degree
of vertex $u$ of $G'$. That is, $d_{G'}(u)+d_{G'}(v)=5$. Then
$(d_{G'}(u),d_{G'}(v))=(1,4),(4,1),(2,3)$ or $(3,2)$.

If $(d_{G'}(u),d_{G'}(v))=(1,4)$ or $(4,1)$, then $G'$ is the star
$K_{1,4}$ with five vertices and $G$ is the complete graph $K_4$
with four vertices (otherwise, $G$ is not a cubic
graph). Hence $M(G)=3$.

If $G'\neq K_{1,4}$, then for every edge $e=(u,v)\in E(G')$ we
have $(d_{G'}(u),d_{G'}(v))=(2,3)$ or $(3,2)$. This implies that $G'$ is a
graph obtained from a connected cubic graph $G^+$ by subdividing every edge once (i.e., $G'=S(G^+)$).
By Theorem 2.4, $M(G)=2^{\nu/6+1}$ and hence the entropy of $G$ equals $\frac{\log 2}{3}$.
\end{proof}
\begin{remark}
By the theorem above, the conjecture posed by Lov\'asz and Plummer holds
for the connected cubic line graphs.
\end{remark}

\section{Applications} \hspace*{\parindent}
As applications, in this section we enumerate perfect matchings of
Sierpinski gasket with dimension two, $3.12.12$ lattices, and Kagom\'e lattices  in the context of statistical
physics.
\begin{figure}[htbp]
  \centering
 \scalebox{0.8}{\includegraphics{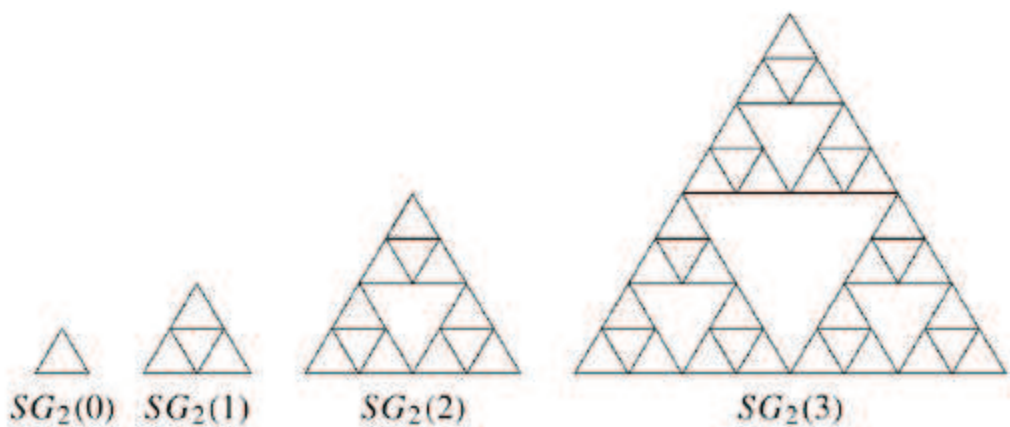}}
  \caption{\ \ The first four stages $n=0,1,2,3$ of two-dimensional Sierpinski gasket $SG_2(n)$.}
\end{figure}\
\subsection{The two-dimensional Sierpinski gasket}\ \
Fractals are geometrical structures of non-integer Hausdorff dimension realized by repeated
construction of an elementary shape on progressively smaller length scales
\cite{GAM81,GMA80,Guyer84,DV98}. A well-known example of fractal
is the Sierpinski gasket which has been extensively studied in several contexts \cite{CC08,GAM81,GMA80,Guyer84,DV98}.
The construction of the two-dimensional Sierpinski gasket, denoted by $SG_2(n)$ at stage $n$ is shown in Figure 7.
At stage $n=0$, it is an equilateral triangle; while stage $n+1$ is obtained by the
juxtaposition of three $n$-stage structures. The two-dimensional Sierpinski gasket has
fractal dimensionality $\ln3/\ln2$ \cite{GAM81}. It is not difficult to see that $SG_2(n)$ has
$\frac{3}{2}(3^n+1)$ vertices and $3^{n+1}$ edges \cite{CC08}.
Now we construct a graph sequence $\{G_n\}_{n\geq 0}$ such that
the line graph of $G_n$ is isomorphic to $SG_2(n)$, $n=0,1,2,\cdots$. In fact,
at stage $n=0$, $G_0$ is a star $K_{1,3}$; while stage $n+1$ is obtained by the
juxtaposition of three $n$-stage structures, see Figure 8. It is easily verified that $L(G_n)=SG_2(n)$.
\begin{figure}[htbp]
  \centering
 \scalebox{0.8}{\includegraphics{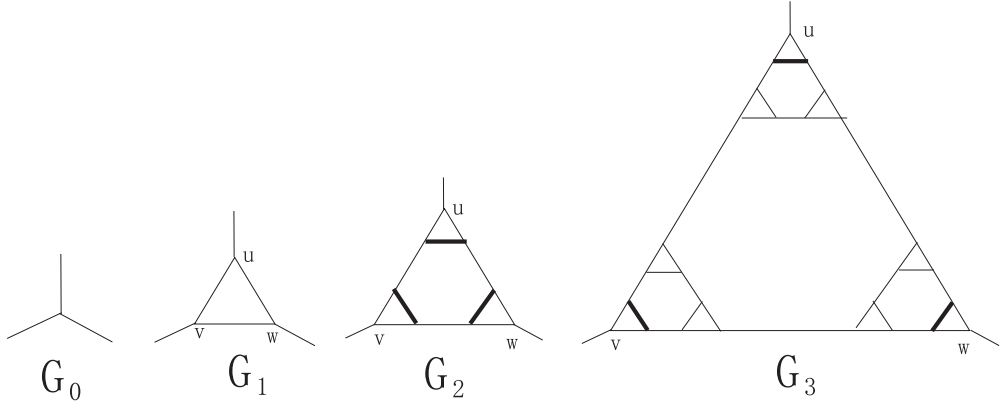}}
  \caption{\ \ The first four stages $n=0,1,2,3$ of the graphs $G_n$.}
\end{figure}

By the definition of $G_n$, $G_n$ has $3^n+3$ vertices, where each of $3^n$ vertices has degree three and each of
three vertices has degree one. The number of edges of $G_n$ equals the number of vertices of $SG_2(n)$, which is
$\frac{3}{2}(3^n+1)$. Obviously, $G_n$ has an even number of edges if $n$ is odd, and $G_n$ has an odd number of edges
otherwise. Since $G_n$ has three vertices of degree one, we can not use directly Theorem 2.4. So we construct a new graph
$G_n'$ from $G_n$ ($n>1$) by deleting the three vertices of degree one and $u,v,w$ illustrated in Figure 8
and replacing each of the three bold edges with two multiple edges. By Claim 1 in the proof of Theorem 2.4,
$$M(L(G_n))=M(L(G_n')).$$
Note that $G_n'$ is a cubic graph with $3^n-3$ vertices which has an even number of edges
if $n$ is odd and an even number of edges otherwise.
Hence, by Theorem 2.4, if $n$ is odd, then
$$M(L(G_n'))=2^{(3^n-3)/2+1}=2^{(3^n-1)/2}.$$

So we give a new method to prove the following:
\begin{theorem}[Chang and Chen, \cite{CC08}]
Suppose $SG_2(n)$ is the two-dimensional Sierpinski gasket. Then the number of perfect matchings of $SG_2(n)$ equals
$2^{(3^n-1)/2}$ if $n$ is odd and zero otherwise. The entropy of $SG_2(n)$ equals
$\frac{2\log 2}{3}$ if $n$ is odd.
\end{theorem}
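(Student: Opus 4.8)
The plan is to realize $SG_2(n)$ as the line graph of the auxiliary graph $G_n$ constructed just above, and then reduce to Theorem 2.4 via Claim 1. First I would record the structural facts about $G_n$: it has $3^n$ vertices of degree three, three vertices of degree one, and $\frac{3}{2}(3^n+1)$ edges, so $|E(G_n)|$ is even precisely when $n$ is odd. The three degree-one vertices are attached, through the three ``corner'' vertices $u,v,w$ of Figure 8, in exactly the pendant-then-degree-three configuration that Claim 1 is designed to eliminate. So for $n>1$ I would apply Claim 1 three times (once at each corner), deleting each pendant vertex together with its degree-three neighbour and joining that neighbour's two other neighbours by a new edge; Claim 1 guarantees $M(L(G_n))=M(L(G_n'))$ at each step, and the net effect is the cubic graph $G_n'$ on $3^n-3$ vertices. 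One checks that $|E(G_n')|=|E(G_n)|-6$ still has the same parity, i.e.\ is even iff $n$ is odd.

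Next, for $n$ odd with $n>1$, Theorem 2.4 applies directly to the connected cubic $G_n'$: its number of $3$-degree vertices is $3^n-3$, so $M(L(G_n'))=2^{(3^n-3)/2+1}=2^{(3^n-1)/2}$, and hence $M(SG_2(n))=M(L(G_n))=2^{(3^n-1)/2}$. The entropy statement then follows by plugging into the definition $\mathcal E=\lim 2\log(M)/|V|$ with $|V(SG_2(n))|=\frac{3}{2}(3^n+1)$: the exponent $(3^n-1)/2$ over $\tfrac{3}{4}(3^n+1)$ tends to $\tfrac{2}{3}$, giving $\mathcal E(SG_2(n))=\frac{2\log 2}{3}$ as claimed (this matches Corollary 2.5, which is natural since $SG_2(n)$ is a cubic line graph for $n\geq 1$). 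The small cases $n=0$ (a single triangle $K_3$, which has no perfect matching since it has an odd number of vertices) and $n=1$ (where $G_1$ is small enough to handle by hand, or where one notes $SG_2(1)$ has $6$ vertices and one counts $2=2^{(3-1)/2}$ matchings directly) should be dispatched separately, since the Claim-1 reduction is stated for $n>1$.

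For the ``zero otherwise'' half: when $n$ is even, $SG_2(n)$ has $\frac{3}{2}(3^n+1)$ vertices, and $3^n+1$ is even while $\frac{3}{2}(3^n+1)$ is odd (since $3^n+1\equiv 2\pmod 4$ when $n$ is even, so half of it is odd) — actually the cleanest route is simply that $|V(SG_2(n))|=\tfrac32(3^n+1)$ is odd exactly when $n$ is even, and a graph with an odd number of vertices has no perfect matching. I would state this parity computation explicitly as the one genuinely arithmetic point.

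The main obstacle I anticipate is not any deep argument but rather the bookkeeping of the Claim-1 reductions at the three corners: one must verify that after deleting a pendant vertex and its degree-three neighbour the remaining graph stays connected, that the two ``other neighbours'' of each corner vertex are distinct (so Claim 1's hypothesis $|\{v_1,v_2\}|=2$ is met — this should follow from the juxtaposition structure in Figure 8 for $n\geq 2$), and that the three reductions do not interfere with one another (the three corners are far apart for $n$ large, and for $n=2$ one checks directly). If in some case two of the relevant vertices coincide, one would first subdivide the offending edges twice and invoke Lemma 2.2, exactly as in Subcase 1.2.1(b) of the proof of Theorem 2.4. Once that reduction is in place, everything else is a direct substitution into Theorem 2.4 and the entropy formula.
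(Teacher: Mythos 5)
Your proposal is correct and follows essentially the same route as the paper: realize $SG_2(n)=L(G_n)$, eliminate the three pendant-plus-corner configurations via Claim~1 (the paper notes that this reduction doubles the three ``bold'' edges, i.e.\ the two remaining neighbours of each corner are already adjacent, which Theorem~2.4 tolerates since multiple edges are allowed), and then apply Theorem~2.4 to the resulting connected cubic graph on $3^n-3$ vertices with an even number of edges. Your explicit parity argument for the even-$n$ case and your separate treatment of $n=0,1$ fill in details the paper leaves implicit, but the core argument is the same.
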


\begin{figure}[htbp]
  \centering
  \scalebox{0.6}{\includegraphics{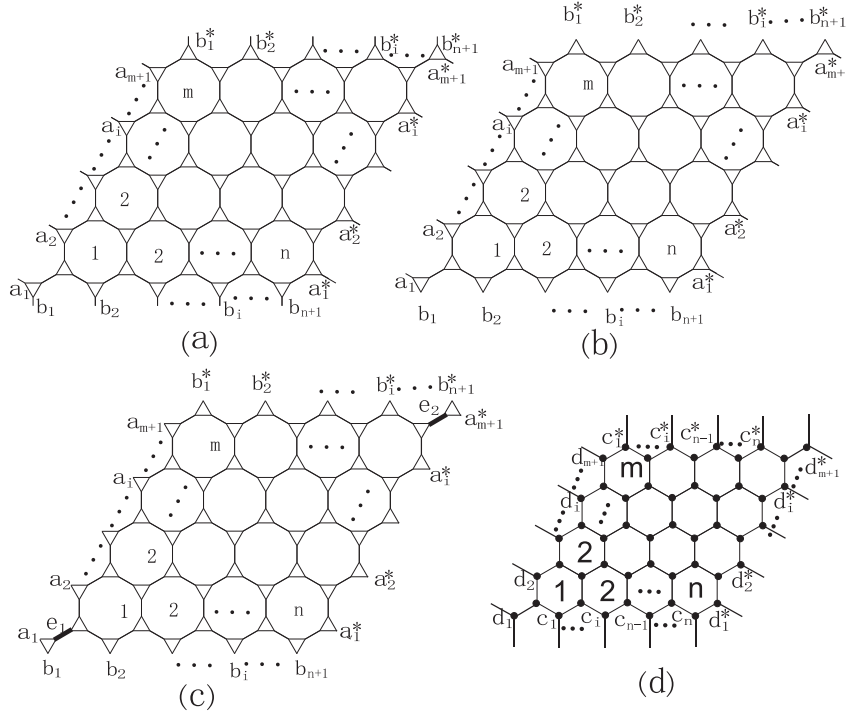}}
  \caption{\ (a)\ \ The $3.12.12$ lattice $R^T(n,m)$ with toroidal boundary
  condition,
  \ (b)\ \ the $3.12.12$ lattice $R^C(n,m)$ with cylindrical boundary condition,
  \ (c)\ \ the $3.12.12$ lattice $R^F(n,m)$ with free boundary condition,
  \ (d)\ \ the hexagonal lattice $H^T(n,m)$ with toroidal boundary condition.}
\end{figure}
\subsection{3.12.12 lattices}
The 3.12.12 lattice $R^T(n,m)$ with toroidal boundary condition is
shown in Figure 9(a), where
$(a_1,a_1^*),(a_2,a_2^*),\ldots,(a_{m+1},a_{m+1}^*)$, and
$(b_1,b_1^*),(b_2,b_2^*),\ldots,\\(b_{n+1},b_{n+1}^*)$ are edges in
$R^T(n,m)$. The 3.12.12 lattice $R^T(n,m)$ has been used by Fisher
\cite{Fisher66} in a dimer formulation of the Ising model. If we
delete edges $(b_1,b_1^*),(b_2,b_2^*),\ldots,(b_{n+1},b_{n+1}^*)$
from $R^T(n,m)$, the $3.12.12$ lattice $R^C(n,m)$ with cylindrical
boundary condition is obtained (see Figure 9(b)). If we
delete edges $(a_1,a_1^*),(a_2,a_2^*),\ldots,(a_{m+1},a_{m+1}^*)$
from $R^C(n,m)$, the $3.12.12$ lattice $R^F(n,m)$ with free
boundary condition is obtained (see Figure 9(c)). By means of Pfaffians,
Fisher \cite{Fisher66} and Wu \cite{Wu06} proved that the logarithm of
the number of perfect matchings of $R^T(n,m)$, divided by $3(m+1)(n+1)$ (the number of edges of
each of perfect matchings of $R^T(n,m)$), converges $\frac{1}{3}\ln 2$ as $m,n\rightarrow \infty$, which
is called the entropy of $R^T(n,m)$ by statistial physicists. By Theorem 2.4, we derive the exact formulae
of the numbers of perfect matchings of $R^T(n,m), R^C(n,m)$, and $R^F(n,m)$ as follows.
\begin{theorem}
Let $R^T(n,m), R^C(n,m)$, and $R^F(n,m)$ be the $3.12.12$ lattices with toroidal, cylindrical, and free boundary
conditions, respectively. Then
$$M(R^T(n,m))=2^{mn+m+n+2},$$$$M(R^C(n,m))=2^{mn+m+1},$$$$M(R^F(n,m))=2^{mn}.$$
Hence $R^T(n,m), R^C(n,m)$, and $R^F(n,m)$ have the same entropy $\frac{1}{3}\ln 2$.
\end{theorem}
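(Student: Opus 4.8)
The plan is to realize each of the three $3.12.12$ lattices as the line graph of a suitable graph whose $2$- and $3$-degree structure we control, and then apply Theorem 2.4 together with Claim~1. The key observation is that the $3.12.12$ lattice is obtained by replacing every vertex of the hexagonal lattice with a triangle; equivalently, $R^T(n,m)$ is the line graph $L(H)$ where $H$ is essentially the hexagonal lattice $H^T(n,m)$ of Figure~9(d) (more precisely, a graph whose line graph is the $3.12.12$ lattice — the triangles of the $3.12.12$ lattice correspond to the $3$-degree vertices of $H$, and the dodecagons to the faces). So first I would exhibit, for each of the three boundary conditions, a graph $H^T$, $H^C$, $H^F$ with $L(H^T)=R^T(n,m)$, etc., describing them explicitly in terms of the hexagonal lattices with toroidal, cylindrical, and free boundary conditions shown in Figure~9(d) and its variants.

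Next I would count, in each case, the number $n$ of $3$-degree vertices of the underlying graph $H$ and check that $H$ has an even number of edges (or, where it does not, insert one subdivision to restore parity using Lemma~2.2(b), which does not change $M(L(H))$). For the toroidal lattice $H^T(n,m)$, every vertex has degree $3$, the number of vertices is $2(m+1)(n+1)$, and the number of edges is $3(m+1)(n+1)$; so by Corollary~2.5 we get $M(R^T(n,m)) = M(L(H^T)) = 2^{(m+1)(n+1)/2+1}$... but the claimed exponent is $mn+m+n+2 = (m+1)(n+1)+1$, which suggests that the correct underlying graph has $2(m+1)(n+1)$ \emph{three}-degree vertices, i.e. it is the hexagonal lattice itself with $n = 2(m+1)(n+1)$ in the notation of Theorem~2.4, giving $2^{n/2+1} = 2^{(m+1)(n+1)+1}$. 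So the bookkeeping step is: identify the hexagonal graph precisely, confirm it is cubic with $2(m+1)(n+1)$ vertices, confirm parity of its edge count, and apply Theorem~2.4 directly to obtain $M(R^T(n,m)) = 2^{(m+1)(n+1)+1} = 2^{mn+m+n+2}$.

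For the cylindrical and free cases, the underlying hexagonal graph $H^C$ (resp. $H^F$) has some vertices of degree $2$ on the boundary and, after deleting the appropriate $a$- or $b$-type edges, possibly some vertices of degree $1$. Where degree-$1$ vertices appear I would use Claim~1 from the proof of Theorem~2.4 to excise the pendant edge together with its degree-$3$ neighbor (replacing the two remaining edges at that neighbor by a single edge, or by a double edge when forced), each such move reducing the count of $3$-degree vertices by exactly $1$ and leaving $M(L(\cdot))$ unchanged; the boundary degree-$2$ vertices are harmless since Theorem~2.4 already allows degree $2$. Then counting the remaining $3$-degree vertices — which should come out to $2mn+2m$ for the cylinder and $2mn$ for the free case — and checking edge-parity, Theorem~2.4 yields $M(R^C(n,m)) = 2^{(2mn+2m)/2+1} = 2^{mn+m+1}$ and $M(R^F(n,m)) = 2^{(2mn)/2} \cdot (\text{correction}) = 2^{mn}$ (the absence of the $+1$ in the free case is consistent with the pendant-vertex reductions having removed one more $3$-degree vertex, so the exponent is $n/2+1$ with $n = 2mn-2$). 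The entropy statement is then immediate: dividing $\log M$ by the number of edges in a perfect matching, which is half the number of vertices of the $3.12.12$ lattice $\sim 3(m+1)(n+1)$, the leading term $mn\log 2$ divided by $3mn$ gives $\frac13\log 2$ in all three cases.

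The main obstacle I expect is purely combinatorial rather than conceptual: getting the boundary bookkeeping exactly right — which graph has the line graph equal to $R^C(n,m)$ and $R^F(n,m)$ after the $a$- and $b$-edges are deleted, how many degree-$1$, degree-$2$, and degree-$3$ vertices each one has, whether the edge count is even before or only after one harmless subdivision, and how many applications of Claim~1 are needed — so that the exponents land on exactly $mn+m+1$ and $mn$ rather than off by a small additive constant. Once the underlying hexagonal graphs and their vertex-degree profiles are pinned down (reading off Figure~9(d) and the analogous cylindrical and free pictures), the rest is a direct invocation of Lemma~2.2(b), Claim~1, and Theorem~2.4.
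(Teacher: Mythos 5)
Your plan follows the paper's proof essentially step for step: realize each $3.12.12$ lattice as the line graph of a graph built from the hexagonal lattice, count its $3$-degree vertices, and invoke Theorem 2.4 together with Claim 1 to absorb the pendant vertices created at the boundary. One concrete correction is needed in your identification of the underlying graph, and one step in the free case is missing from your outline. First, $R^T(n,m)$ is \emph{not} $L(H^T(n,m))$ --- that line graph is the Kagom\'e lattice, which has only $3(m+1)(n+1)$ vertices, whereas $R^T(n,m)$ has $6(m+1)(n+1)$. The correct underlying graph is the subdivision $S(H^T(n,m))$, i.e.\ $R^T(n,m)$ is the clique-inserted graph of $H^T(n,m)$ in the sense of Corollary 2.6 (your phrase ``replacing every vertex by a triangle'' is exactly clique-insertion, so your geometric picture is right even though the formula $R^T=L(H^T)$ is not). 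This matters for two reasons: $H^T(n,m)$ has $3(m+1)(n+1)$ edges, which is odd whenever $m$ and $n$ are both even, so the parity hypothesis of Theorem 2.4 can fail for $H^T(n,m)$ itself; for $S(H^T(n,m))$ the edge count $6(m+1)(n+1)$ is always even. Since subdivision does not change the number of $3$-degree vertices, the exponent is still $2(m+1)(n+1)/2+1=mn+m+n+2$, so your final answer survives the correction.

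For the other two cases the paper does what you describe: it cuts the wrap-around edges of $H^T(n,m)$ open into pairs of pendant edges, subdivides every non-pendant edge once, and applies Claim 1 once per pendant vertex (each application removing exactly one $3$-degree vertex, as you say). The counts are $2mn+2m+2n+2$ three-degree and $2n+2$ one-degree vertices for the cylinder, giving $2^{(2mn+2m)/2+1}=2^{mn+m+1}$, matching your guess. In the free case, however, the line graph of the cut-open-and-subdivided graph is $R^F(n,m)$ itself, and the paper must first observe that every perfect matching of $R^F(n,m)$ is forced to contain the four corner edges $e_1$, $e_2$, $(a_1,b_1)$, $(a_{m+1},b_{n+1})$; deleting their endpoints produces a graph that is the line graph of a smaller $H_3^{\star}(n,m)$ with $2mn+2m+2n-2$ three-degree and $2m+2n$ one-degree vertices, whence $2^{(2mn-2)/2+1}=2^{mn}$. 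So of your two hedged alternatives for the free exponent, the second one ($n=2mn-2$ with the $+1$) is the one that occurs, but only after the forced-edge reduction, which your outline does not anticipate; it is, as you suspected, exactly the sort of boundary bookkeeping on which the proof turns.
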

\begin{proof}
In order to prove the theorem, we introduce the hexagonal lattices which have been extensively
studied by statistical physicists \cite{Elser89, Fisher66,Wu06}.
The hexagonal lattice $H^T(n,m)$ with toroidal boundary condition
is shown in Figure 9(d), where
$(d_1,d_1^*),(d_2,d_2^*),\ldots,(d_{m+1},d_{m+1}^*)$ and
$(d_1,c_1^*),(c_1,c_2^*),\ldots,(c_{n-1},c_{n}^*),(c_n,d_{m+1}^*)$ are edges in
$H^T(n,m)$. It is not difficult to see that the line graph of $S(H^T(n,m))$ is $R^T(n,m)$, where $S(H^T(n,m))$
is the graph obtained from
$H^T(n,m)$ by subdividing each edge of $H^T(n,m)$ once. Note that there exists $2(m+1)(n+1)$
vertices of degree three in $S(H^T(n,m))$. By Theorem 2.4,
$$M(R^T(n,m))=M(L(S(H^T(n,m))))=2^{2(m+1)(n+1)/2+1}=2^{mn+m+n+2}.$$
\begin{figure}[htbp]
  \centering
  \scalebox{0.7}{\includegraphics{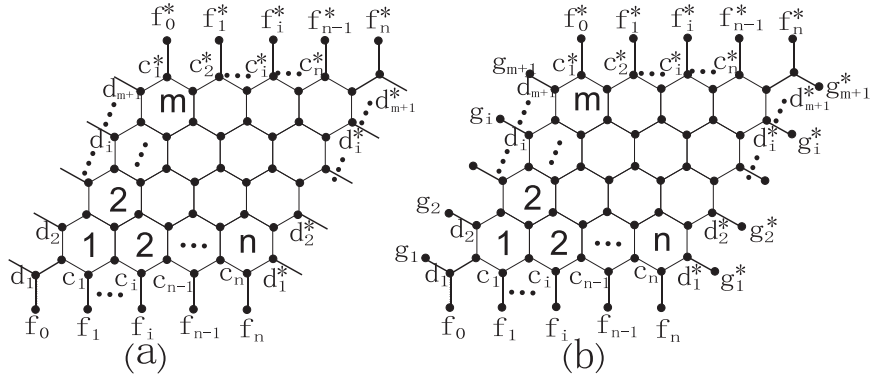}}
  \caption{\ (a)\ \ The graph $H^T_1(n,m)$,
  \ (b)\ \ the graph $H^T_2(n,m)$.}
\end{figure}

Let $H^T_1(n,m)$ be the graph obtained from $H^T(n,m)$ by replacing each of the $n+1$ edges
$(d_1,c_1^*),(c_1,c_2^*),\ldots,(c_{n-1},c_{n}^*),(c_n,d_{m+1}^*)$ with two independent edges
(i.e., replacing edge $(d_1,c_1^*)$ with edges $(d_1,f_0)$ and $(c_1^*,f_0^*)$,
replacing edge $(c_1,c_2^*)$ with edges $(c_1,f_1)$ and $(c_2^*,f_1^*)$, $\cdots$,
replacing edge $(c_{n-1},c_n^*)$ with edges $(c_{n-1},f_{n-1})$ and $(c_n^*,f_{n-1}^*)$, and replacing
edge $(c_n,d_{m+1}^*)$ by edges $(c_n,f_n)$ and $(d_{m+1}^*,f_n^*)$, see Figure 10(a)).
For the graph $H^T_1(n,m)$, subdivide each edge
in $E(H^T_1(n,m))\backslash A$ once, where $E(H^T_1(n,m))$ is the edge set of $H^T_1(n,m)$ and
$A$ is the set of $2n+2$ edges $(d_1,f_0),(c_1^*,f_0^*),(c_1,f_1),(c_2^*,f_1^*),\cdots,(c_{n-1},f_{n-1}),\\
(c_n^*,f_{n-1}^*),(c_n,f_n),(d_{m+1}^*,f_n^*)$. The resulting graph is denoted by $H^{\star}_1(n,m)$. Obviously,
$R^C(n,m)$ is the line graph of $H^{\star}_1(n,m)$. Note that $H^{\star}_1(n,m)$ has
$2(m+1)(n+1)=2mn+2m+2n+2$ vertices of degree three and $2n+2$ vertices of degree one, and
the degree of each of other vertices of $H^{\star}_1(n,m)$ is two.
By Theorem 2.4 and Claim 1 in the proof of Theorem 2.4,
$$M(R^C(n,m))=2^{(2mn+2m+2n+2-2n-2)/2+1}=2^{mn+m+1}.$$

Let $H^T_2(n,m)$ be the graph obtained from $H^T_1(n,m)$ by replacing each of the $m+1$ edges
$(d_1,d_1^*),(d_2,d_2^*),\ldots,(d_{m+1},d_{m+1}^*)$ with two independent edges
(i.e., replacing edge $(d_1,d_1^*)$ with edges $(d_1,g_1)$ and $(d_1^*,g_1^*)$,
replacing edge $(d_2,d_2^*)$ with edges $(d_2,g_2)$ and $(d_2^*,g_2^*)$, $\cdots$,
replacing edge $(d_{m+1},d_{m+1}^*)$ with edges $(d_{m+1},g_{m+1})$ and $(d_{m+1}^*,g_{m+1}^*)$, see Figure 10(b)).
Let $B=\{(d_i,g_i),(d_i^*,g_i^*)|i=1,2,\cdots,m+1\}$. For the graph $H^T_2(n,m)$, subdivide each edge
in $E(H^T_2(n,m))\backslash (A\cup B)$ once. The resulting graph is deonted $H_2^{\star}(n,m)$. It is not difficult to
see that the line graph of $H_2^{\star}(n,m)$ is just $R^F(n,m)$, i.e., $R^F(n,m)=L(H_2^{\star}(n,m))$.

Let $H_3^T(n,m)$ be the graph obtained from $H_2^T(n,m)$ by deleting six vertices $f_0,d_1,g_1,f_n^*,\\d_{m+1}^*$ and
$g_{m+1}^*$ illustrated in Figure 10(b). Denote by $H_3^{\star}(n,m)$ the graph obtained by subdividing each edge of
$H_3^t(n,m)$ once which is not a pendent edge. Hence the number of 3-degree
(resp., 1-degree) verteices of $H_3^{\star}(n,m)$ equals
$2(m+1)(n+1)-4=2mn+2m+2n-2$ (resp., 2m+2n). By Theorem 2.4 and Claim 1 in the proof of Theorem 2.4,
$$M(L(H_3^{\star}(n,m)))=2^{(2mn+2m+2n-2-2m-2n)/2+1}=2^{mn}.\eqno{(15)}$$

Note that every perfect matching of $R^F(n,m)$ contains edges $(a_1,b_1),(a_{m+1},b_{n+1})$
and the two bald edges $e_1$ and $e_2$ illustrated in Figure 9(c).
Let $R^F_1(n,m)$ be the graph obtained from $R^F(n,m)$ by deleting the eight vertices
incident with edges $e_1,e_2, (a_1,b_1),(a_{m+1},b_{n+1})$. Then
$$M(R^F(n,m))=M(R^F_1(n,m)).\eqno{(16)}$$

Obviously, the line graph of $H_3^{\star}(n,m)$ is $R^F_1(n,m)$.
Thus
$$M(R^F_1(n,m))=M(L(H_3^{\star}(n,m))).\eqno{(17)}$$
By $(15),(16)$, and $(17)$,
$$M(R^F(n,m))=2^{mn}.$$

Hence
$$\lim_{m,n\rightarrow \infty}\frac{\ln {M(R^T(n,m))}}{3(m+1)(n+1)}=
\lim_{m,n\rightarrow \infty}\frac{\ln {M(R^C(n,m))}}{3(m+1)(n+1)}=
\lim_{m,n\rightarrow \infty}\frac{\ln {M(R^F(n,m))}}{3(m+1)(n+1)}=\frac{1}{3}\ln 2, $$
implying that Theorem 3.2 holds.
\end{proof}
\begin{remark}
Similary, we can define the $3.3.12$ lattices $R^K(n,m)$ and $R^M(n,m)$ with Klein-bottle and Mobius-band boundary
conditions, respectively. It is similarly verified that
$$M(R^K(n,m))=2^{mn+m+n+2}, M(R^M(n,m))=2^{mn+m+1}.$$
\end{remark}
\begin{figure}[htbp]
  \centering
  \scalebox{0.7}{\includegraphics{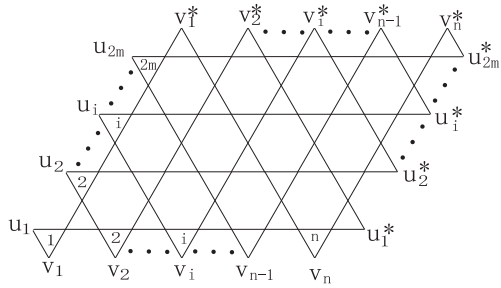}}
  \caption{ The graph $G(n,m)$.}
\end{figure}

\subsection{Kagom\'e lattices}
Let $G(n,m)$ be the plane lattice graph illustrated in Figure 11,
each of whose vertices has degree two or four. For $G(n,m)$,
if we identify each
pair of vertices $u_i$ and $u_i^*$, $v_j$ and $v_j^*$, $i=1,2,\cdots,2m, j=1,2,\cdots,n$, the resulting graph,
denoted by $K^T(n,m)$, is
called the Kagom\'e lattice with toroidal boundary condition by statistical physicists
(see \cite{Elser89,PW88,Wu06,WW07,WW08}). For $G(n,m)$, if we delete
vertices $v_1^*,v_2^*,\cdots,v_n^*$ and identify each pair of vertices $u_i$ and $u_i^*$, $i=1,2,\cdots,2m$,
the resulting graph, denoted by $K^C(n,m)$, is called the Kagom\'e lattice with cylindrical boundary condition
(see \cite{WW08}). And
the graph obtained from $G(n,m)$ by deleting vertices $u_i^*,v_j^*, i=1,2,\cdots,2m,j=1,2,\cdots,n$, is called
the Kagom\'e lattice with free boundary condition, denoted by $K^F(n,m)$. By the definitions of $K^T(n,m), K^C(n,m)$,
and $K^F(n,m)$, we know that all $K^T(n,m), K^C(n,m)$,
and $K^F(n,m)$ have $6mn$ vertices.

The study of the molecular freedom for the kagom\'e lattice has been a
subject matter of interest for many years (see, for example, \cite{Elser89,PW88}), but
most of the studies have been numerical or approximate. By using Paffian orientation,
Wu and Wang \cite{WW08} obtained the interesting formulae of the numbers of
perfect matchings of $K^T(n,m)$ and $K^C(n,m)$ as follows:
$$M(K^T(n,m))=2^{2mn+1},\ \ M(K^C(n,m))=2^{2mn-n+1}.\eqno{(18)}$$
In fact, they gave a more general formulae (each edge was weighted, see \cite{WW08}). Now we give a new method to
prove $(18)$. Furthermore, we will prove the following:
$$M(K^F(n,m))=2^{2mn-2m-n+1}.\eqno{(19)}$$
\begin{figure}[htbp]
  \centering
  \scalebox{0.8}{\includegraphics{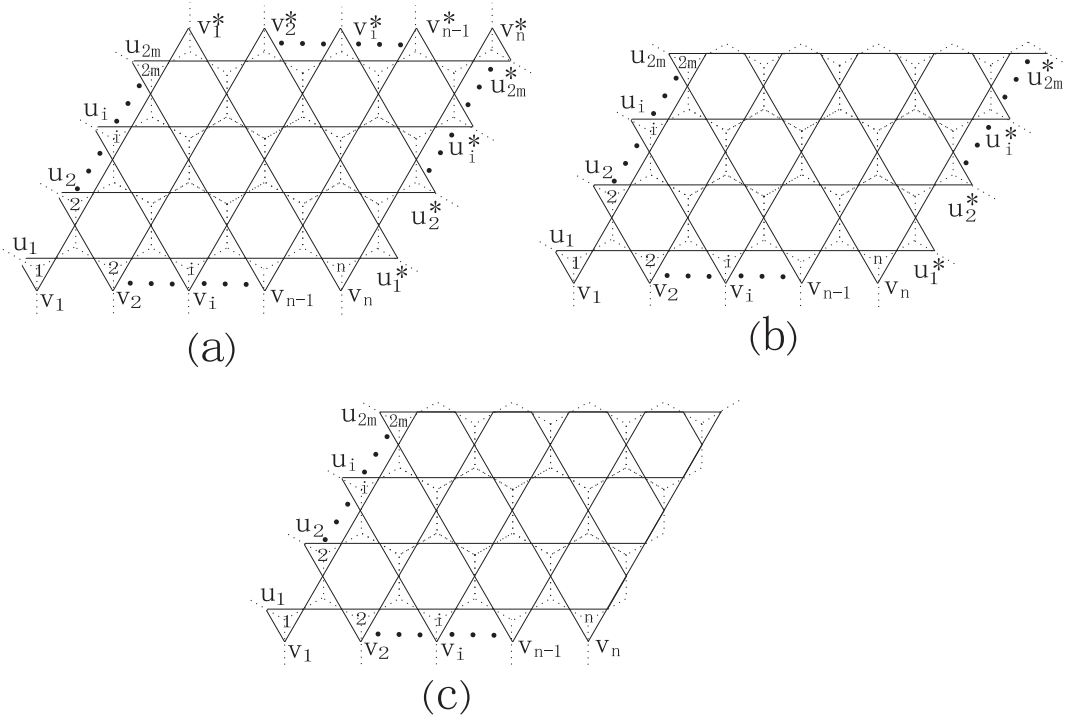}}
  \caption{\ (a)\ \ The graph $K^T(n,m)$, where $u_i$ and $u_i^*$ are identified as a single vertex, $i=1,2,\cdots,2m$,
  and $v_j$ and $v_j^*$ are identified as a single vertex, $j=1,2,\cdots,n$.
  \ (b)\ \ The graph $K^C(n,m)$,  where $u_i$ and $u_i^*$ are identified as a single vertex, $i=1,2,\cdots,2m$.
  \ (c)\ \ The graph $K^F(n,m)$.}
\end{figure}

In the proof of Theorem 3.2, we have defined the hexagonal lattice $H^T(n,m)$ with toroidal boundary condition (see
Figure 9(d)) and two graphs $H_1^T(n,m)$ and $H_2^T(n,m)$ (see Figure 10). It is not difficult to
see that $K^T(n,m)$ is the line graph of $H^T(n-1,2m-1)$ (see Figure 12(a), where
we embed $H^T(n-1,2m-1)$ and $K^T(n,m)$ simultaneously in the plane), $K^C(n,m)$ is the line graph of the graph
, denoted by $H_1(n-1,2m-1)$, which is obtained from $H_1^T(n-1,2m-1)$
by deleting vertices $f_0^*,f_1^*,\cdots,f_{n-1}^*$ (see Figure 12(b), where
we embed $K^C(n,m)$ and $H_1(n-1,2m-1)$ simultaneously in the plane),
and $K^F(n,m)$ is the line graph of the graph, denoted by $H_2(n-1,2m-1)$, which is
obtained from $H_2^T(n-1,2m-1)$ by deleting vertices $f_0^*,f_1^*,\cdots,f_{n-1}^*,g_1^*,g_2^*,\cdots,g_{2m}^*$
(see Figure 12(c), where we embed $K^F(n,m)$ and $H_2(n-1,2m-1)$ simultaneously in the plane),
respectively. With the same
method as in the proof of Theorem 3.2, we can prove $(18)$ and $(19)$. Hence we have the following:

\begin{theorem}
Let $K^T(n,m), K^C(n,m)$, and $K^F(n,m)$ be the Kagom\'e lattices with toroidal, cylindrical, and free boundary
conditions, respectively. Then
$M(K^T(n,m))=2^{2mn+1},M(K^C(n,m))=2^{2mn-n+1}, M(K^F(n,m))=2^{2mn-2m-n+1}.$
Hence $K^T(n,m), K^C(n,m)$, and $K^F(n,m)$ have the same entropy $\frac{2}{3}\ln 2$.
\end{theorem}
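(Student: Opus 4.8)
The plan is to mimic the strategy used for Theorem 3.2, reducing the computation of $M(K^T(n,m))$, $M(K^C(n,m))$ and $M(K^F(n,m))$ to three applications of Theorem 2.4 (together with Claim 1 from its proof), via the line-graph identifications already recorded in the text. First I would observe that $K^T(n,m)=L(H^T(n-1,2m-1))$, so by Theorem 2.4 it suffices to count the $3$-degree vertices of $S$-type preimage: here $K^T(n,m)$ already is a line graph of a subdivided hexagonal-type graph, so I would apply Theorem 2.4 directly to $H^T(n-1,2m-1)$ after recalling (as in the $R^T$ case) that the relevant subdivided hexagonal lattice has $2\cdot n\cdot (2m)=4mn$ vertices of degree three; then $M(K^T(n,m))=2^{4mn/2+1}=2^{2mn+1}$.

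Next, for $K^C(n,m)=L(H_1(n-1,2m-1))$, where $H_1(n-1,2m-1)$ is obtained from $H_1^T(n-1,2m-1)$ by deleting the vertices $f_0^*,f_1^*,\dots,f_{n-1}^*$, I would form the appropriate subdivided graph $H_1^{\star}$ (subdividing every edge that is not one of the "new independent" edges, exactly as $H_1^{\star}(n,m)$ was built from $H_1^T(n,m)$ in the proof of Theorem 3.2), count its $3$-degree vertices ($4mn-\text{(correction from the }n\text{ deleted rows)}$, giving $4mn-2n$) and its $1$-degree vertices ($2n$), and apply Theorem 2.4 plus Claim 1 to strip the pendant edges: $M(K^C(n,m))=2^{(4mn-2n)/2+1}=2^{2mn-n+1}$. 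For $K^F(n,m)=L(H_2(n-1,2m-1))$, where now additionally the vertices $g_1^*,\dots,g_{2m}^*$ are deleted, I would repeat the same bookkeeping: the subdivided graph has $4mn-4m-2n$ vertices of degree three (two corrections, one for the $n$ horizontal rows and one for the $2m$ vertical columns, as in the passage from $H_2^T$ to $H_3^{\star}$), so Theorem 2.4 and Claim 1 give $M(K^F(n,m))=2^{(4mn-4m-2n)/2+1}=2^{2mn-2m-n+1}$. As in Theorem 3.2 one may need to first discard forced edges on the boundary (the analogue of the "bald edges" and corner edges $e_1,e_2$), replacing $K^F(n,m)$ by a graph $K^F_1(n,m)$ with $M(K^F(n,m))=M(K^F_1(n,m))$ whose line-graph preimage is exactly the subdivided graph to which Theorem 2.4 applies.

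Finally, the entropy claim is an immediate corollary: since all three lattices have $6mn$ vertices and the exponent of $2$ in each count is $2mn+O(m+n)$, we get
$$\lim_{m,n\to\infty}\frac{\ln M(K^{\bullet}(n,m))}{6mn}=\frac{2mn\ln 2}{6mn}=\frac{\ln 2}{3},$$
and rescaling by the number of edges in a perfect matching ($3mn$) yields the stated value $\frac{2}{3}\ln 2$.

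The main obstacle I anticipate is purely combinatorial accounting rather than anything conceptual: one must correctly identify, for each boundary condition, which edges of the hexagonal-type graph are forced (or must be replaced by pairs of independent edges before subdividing), and then correctly count the resulting numbers of degree-three and degree-one vertices in the subdivided graph so that the exponent comes out as claimed. The parity condition required by Theorem 2.4 (even number of edges) and the handling of pendant edges via Claim 1 must also be checked in each of the three cases, and the identifications $K^T=L(H^T(n-1,2m-1))$ etc.\ should be verified against Figure 12; these are routine but error-prone, which is exactly why the proof in the text says only "with the same method as in the proof of Theorem 3.2" and leaves the details to the reader.
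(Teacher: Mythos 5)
Your proposal follows the same route as the paper: realize $K^T(n,m)$, $K^C(n,m)$, $K^F(n,m)$ as line graphs of $H^T(n-1,2m-1)$ and of its modifications $H_1(n-1,2m-1)$ and $H_2(n-1,2m-1)$, then apply Theorem 2.4 together with Claim 1 for the pendant vertices --- exactly the argument the paper compresses into ``with the same method as in the proof of Theorem 3.2.'' Two small cautions: no subdivision is involved here (the Kagom\'e lattice is the line graph of the hexagonal lattice itself, unlike the $3.12.12$ case, though this does not change the degree-three count of $4mn$), and your intermediate tallies for $K^C$ are inconsistent with the exponent $(n_3-n_1)/2+1$ that Claim 1 yields (the correct counts are $n_3=4mn-n$ and $n_1=n$, not $4mn-2n$ and $2n$), even though your final exponents come out right.
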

\begin{remark}
Similary, we can define the Kagom\'e lattices $K^K(n,m)$ and $K^M(n,m)$ with Klein-bottle and Mobius-band boundary
conditions, respectively. It is similarly verified that
$$M(K^K(n,m))=2^{2mn+1}, M(K^M(n,m))=2^{2mn-n+1}.$$
\end{remark}

\section{CONCLUDING REMARKS}
Kuperberg \cite{Kup98} showed that the number of perfect matchings of
the line graph of a graph with vertices of degree at most 3 (and
with an even number of edges) is a power of 2. In this paper, we obtain the exact formula of
the number of perfect matchings of the line graph of a graph with vertices of degree equal to
two or three and with an even number of edges. Moreover, our result implies that
the conjecture of Lov\'asz and Plummer on the
perfect matchings of regular graphs holds for the connected cubic line graphs
and the line graphs of connected cubic graphs with an even number of edges.
Finally, as applications of our result, we use a unified method to prove some known formulae of
perfect matchings of the Kagom\'e lattices
with toroidal and cylindrical boundary conditions
by Wang and Wu \cite{WW07,WW08}, the $3.12.12$ lattices with toroidal
boundary condition by Fisher \cite{Fisher66} and Wu \cite{Wu06}, and
the Sierpinski gasket with dimension two by Chang and Chen \cite{CC08},
respectively. Furthermore, by using this unified approach
we solve the problem of enumeration of perfect matchings of the Kagom\'e lattices
with free, Klein-bottle, and Mobius-band boundary conditions, and the $3.12.12$ lattices with cylindrical,
free, Klein-bottle, and Mobius-band boundary conditions, respectively.


\end{document}